\documentclass[10pt,a4paper]{amsart}
\usepackage{amssymb}

%

\usepackage[dvipsnames]{xcolor} 
\colorlet{cite}{LimeGreen!50!Green}
\usepackage{tikz}  
\usetikzlibrary{arrows,positioning}
\tikzset{ 
  baseline=-2.3pt,
  text height=1.5ex, text depth=0.25ex,
  >=stealth,
  node distance=2cm,
  mid/.style={fill=white,inner sep=2.5pt},
}

\usepackage{amsthm, amssymb, amsfonts}	

\usepackage{graphicx,caption,subcaption}

\usepackage{braket}  
\usepackage{fouriernc}  
\usepackage{microtype}

\usepackage[%
  bookmarks=true,			
  unicode=true,			
  pdftitle={Compactifications of Adjoint Orbits and their Hodge Diamonds},		%
  pdfauthor={Brian Callander}{Elizabeth Gasparim},	%
  pdfkeywords={Hodge diamond}{adjoint orbit}{compactification}{Lefschetz fibration},	
  colorlinks=true,		
  linkcolor=Red,			
  citecolor=cite,		
  filecolor=magenta,		
  urlcolor=RoyalBlue			
]{hyperref}				
\usepackage{cleveref}

\newtheoremstyle{mydef}
  {}		
  {}		
  {}		
  {}		
  {\scshape}	
  {. }		
  { }		
  {\thmname{#1}\thmnumber{ #2}\thmnote{ #3}}	


\theoremstyle{plain}	
\newtheorem{theorem}{Theorem} 
\newtheorem{lemma}[theorem]{Lemma} 
\newtheorem{proposition}[theorem]{Proposition}
\newtheorem{corollary}[theorem]{Corollary}

\theoremstyle{mydef} 
\newtheorem{definition}[theorem]{Definition}

\theoremstyle{remark}

\newtheorem{remark}{Remark}
\newtheorem*{question*}{Question}

\crefname{part}{Part}{Parts}
\crefname{chapter}{Chapter}{Chapters}
\crefname{section}{Section}{Sections}
\crefname{theorem}{Theorem}{Theorems}
\crefname{proposition}{Proposition}{Propositions}
\crefname{lemma}{Lemma}{Lemmata}
\crefname{corollary}{Corollary}{Corollaries}
\crefname{definition}{Definition}{Definitions}
\crefname{example}{Example}{Examples}
\crefname{remark}{Remark}{Remarks}
\crefname{notation}{Notation}{Notations}
\crefname{figure}{Figure}{Figures}
\crefname{enumi}{Item}{Items}


\newcommand{\abs}[1]{\left\lvert #1 \right\rvert}

\newcommand{\homog}{\text{hom}}

\DeclareMathOperator{\Coh}{Coh}

\DeclareMathOperator{\Proj}{Proj}

\DeclareMathOperator{\id}{id}

\DeclareMathOperator{\Lag}{Lag}
\DeclareMathOperator{\reg}{reg}
\DeclareMathOperator{\crit}{\mathfrak{crit}}

\DeclareMathOperator{\diag}{Diag}

\DeclareMathOperator{\Diag}{Diag}

\author{B. Callander, E. Gasparim, L. Grama,  L. A. B.  San Martin}
\address{Addresses: Callander, Grama, San Martin - Imecc -
Unicamp, Depto. de Matem\'{a}tica. Rua S\'{e}rgio Buarque de Holanda,
651, Cidade Universit\'{a}ria Zeferino Vaz. 13083-859 Campinas - SP, Brasil. \qquad
Gasparim - Depto. Matem\'aticas, Universidad Cat\'olica del Norte, Antofagasta, Chile. \qquad
 emails:
{ briancallander@gmail.com,   etgasparim@gmail.com, linograma@gmail.com, smartin@ime.unicamp.br.}}

\title{Symplectic Lefschetz fibrations \\
from a  Lie theoretical viewpoint}

\begin{document}
\maketitle

\begin{abstract} This is an announcement of  results proved in \cite{GGS1}, \cite{GGS2},  \cite{C}, and \cite{CG}  where
methods from  Lie theory were used as  new tools for the study of symplectic Lefschetz fibrations. 
\end{abstract}

\tableofcontents

\section*{Motivation} 

A   motivation for studying symplectic Lefschetz fibrations is that, in  nice cases, they occur as mirror partners of complex varieties. 
In fact, given a complex variety $Y$, the Homological Mirror Symmetry (HMS) conjecture of Kontsevich  \cite{Ko} predicts the existence of a symplectic mirror partner $X$ with a superpotential $W \colon X \to \mathbb C$. For Fano varieties, the statement of the HMS includes the following:
{\it  The category of A-branes $D(\Lag (W ))$ is equivalent to the derived category of B-branes (coherent sheaves) $D^b (\Coh(X))$ on $X$.}
Here $D(\Lag (W))$ is the  directed Fukaya--Seidel category of vanishing cycles for the symplectic manifold $X$ and $D^b (\Coh(Y))$ is the bounded derived category of coherent sheaves on $Y$. 
An exciting feature of the conjecture is that the A-side is symplectic whereas the B-side is algebraic, and therefore the conjecture provides a dictionary between the two types of geometry -- algebraic and symplectic -- the mirror map interchanging vanishing cycles on the symplectic side with coherent sheaves on the algebraic side.

 HMS  has been described in several cases: elliptic curves  \cite{PZ}, curves of genus two  \cite{Se1}, curves of higher genus  \cite{E}, punctured spheres  \cite{AAEKO}, weighted projective planes and del-Pezzo surfaces  \cite{AKO1}, \cite{AKO2}, quadrics and intersection of two quadrics  \cite{S}, the four torus \cite{AbS}, Calabi--Yau hypersurfaces in projective space  \cite{Sh}, toric varieties  \cite{Ab}, 
Abelian varieties  \cite{F}, hypersurfaces in toric varieties \cite{AAK}, 
  varieties of general type \cite{GKR}, and   non-Fano toric varieties  \cite{BDFKK}.
Nevertheless, the HMS conjecture remains open in most cases.

The B-side of the conjecture is better understood in the sense that a lot is known about the category of coherent sheaves on algebraic varieties. In particular, in the Fano and general type cases,   the famous reconstruction theorem of Bondal and Orlov says that you can recover the variety from its derived category of coherent sheaves \cite{BO}.
In contrast the   A-side is rather  mysterious. The intent of this paper is to contribute to the understanding of LG models and subsequently to their categories of vanishing cycles.
Using Lie theory, we construct LG models $(\mathcal{O}(H_0),f_H))$, where $\mathcal{O}(H_0)$ is the adjoint orbit of a complex semisimple  Lie group and $f_H$ is the height 
function with respect to an element of the  Cartan subalgebra (see Theorem \ref{thm-prin-1}). 

Even though we had HMS as an encouragement to pursue our work, we do not attempt to prove any instance of it, rather we  endeavour to contribute to the understanding of  the A-side of the conjecture by describing examples of symplectic Lefschetz fibrations in arbitrary dimensions. We calculate the  directed Fukaya--Seidel category in the first nontrivial example, namely the adjoint orbit of ${\mathfrak{sl}(2,\mathbb{C})} $. For the case of ${\mathfrak{sl}(3,\mathbb{C})} $ orbits, we discuss (the wild) variations of Hodge diamonds depending on choices of  compactifications for our Lefschetz fibrations.

Acknowledgement: It is a pleasure to thank Denis Auroux for suggesting  corrections and improvements to   the text. 

E.~Gasparim and L.~Grama were supported by Fapesp under grant numbers 2012/10179-5 and 2012/21500-9, respectively.
L.~A.~B.~San Martin was supported by CNPq grant n$^{\mathrm{o}}$ 304982/2013-0 and FAPESP grant n$^{\mathrm{o}}$ 2012/17946-1.

\section{Definitions}

  \begin{definition}
    A \emph{holomorphic Morse function} on a manifold $X$ is a holomorphic
    function $f \colon X \to \mathbb{P}^1$  (or  $f \colon X \to \mathbb C$) which has only non-degenerate
    critical points.  
  \end{definition}

  \begin{definition}
    \label[definition]{def:TLF}
    Let $X$ be a complex manifold of dimension $n$ and $f \colon X \to
    \mathbb{P}^1$  (or  $f \colon X \to \mathbb C$)
    a  surjective holomorphic fibration.  We say that $f$ is a \emph{topological
    Lefschetz fibration} 
    if
    \begin{enumerate}
      \item \label{item:TLFCritPts} there are finitely many critical points $p_1, \dotsc, p_k$, 
	and $f (p_i) \neq f (p_j)$ for $i \neq j$;
      \item \label{item:TLFLocMorse} for each critical point $p$, there are
        complex neighbourhoods $p \in U \subset X $,
        $f (p) \in V \subset \mathbb{P}^1$ on which $f_{\vert U}$ is represented by the holomorphic Morse function
	\[
      f_{\vert U} (z_1, \dots, z_n) = z_1^2 + \dotsb + z_n^2,
	\]
    and such that $\crit f \cap U = \set{p}$; and
  \item \label{item:TLFlocTriv} the restriction $f_{\reg} := f
    \vert_{X - \bigcup X_i}$ to the complement of the singular fibres
    $X_i$ is a locally trivial fibre bundle.
    \end{enumerate}
  \end{definition}
  
    \begin{definition}
    \label[definition]{def:SLF}
    Let $X$ be a complex manifold and $\omega$ a symplectic form making
    $(X, \omega)$ into a symplectic manifold.  We
    say that a topological Lefschetz fibration is a \emph{symplectic Lefschetz
    fibration} 
    if
    \begin{enumerate}
      \item \label{item:SLFFibres} the smooth part of any fibre is a symplectic
	submanifold of $(X, \omega)$; and 
      \item \label{item:SLFTanCones} for each critical point $p_i$, the form $\omega_{p_i}$ is
	non-degenerate on the tangent	cone of $X_i$ at $p_i$. 
    \end{enumerate}
  \end{definition}

\section{Non-examples}

%

\begin{proposition}
Let $M$ be a compact complex manifold with odd Euler characteristic, then $M$ does not fibre over $\mathbb P^1$. 
\end{proposition}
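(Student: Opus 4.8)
The plan is to extract a parity constraint on $\chi(M)$ from the multiplicativity of the Euler characteristic in fibre bundles. First I would fix what ``$M$ fibres over $\mathbb{P}^1$'' means: I read it as the existence of a locally trivial (smooth, equivalently holomorphic) fibre bundle $f\colon M\to\mathbb{P}^1$ with fibre a closed manifold $F$. If one instead only assumes a surjective holomorphic submersion $f$, then compactness of $M$ makes $f$ proper, and Ehresmann's fibration theorem upgrades it to such a bundle, so there is no loss of generality. The goal is then to show that every such $M$ has even Euler characteristic, contradicting the hypothesis.

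Next I would compute $\chi(M)$ using $\chi(M)=\chi(\mathbb{P}^1)\cdot\chi(F)$. A hands-on derivation uses the affine cover $\mathbb{P}^1=U_0\cup U_1$ with $U_0\cong U_1\cong\mathbb{C}$ and $U_0\cap U_1\cong\mathbb{C}^{\ast}$: over each contractible $U_i$ the bundle is topologically trivial, so $f^{-1}(U_i)$ is homotopy equivalent to $F$ while $f^{-1}(U_0\cap U_1)$ is homotopy equivalent to $F\times S^1$, and additivity of $\chi$ under Mayer--Vietoris gives
\[
  \chi(M)=\chi\big(f^{-1}(U_0)\big)+\chi\big(f^{-1}(U_1)\big)-\chi\big(f^{-1}(U_0\cap U_1)\big)=\chi(F)+\chi(F)-\chi(F)\,\chi(S^1)=2\chi(F).
\]
(Equivalently, one runs the Leray--Serre spectral sequence of $f$: since $\mathbb{P}^1$ is simply connected the systems $\mathcal H^q(F)$ are constant, $F$ is a closed manifold so every cohomology group in sight is finite-dimensional, and $\chi(M)=\sum_{p,q}(-1)^{p+q}\dim E_2^{p,q}=\chi(\mathbb{P}^1)\,\chi(F)$.)

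Finally, since $\chi(\mathbb{P}^1)=\chi(S^2)=2$, we get $\chi(M)=2\chi(F)$, which is even and contradicts the hypothesis that $\chi(M)$ is odd; hence no such fibration can exist. I do not expect a genuine obstacle, as the statement is in essence a repackaging of the multiplicativity of $\chi$. The points that deserve a little care are: (i) that ``fibres over $\mathbb{P}^1$'' is meant in the locally trivial sense and \emph{not} as ``admits a Lefschetz fibration over $\mathbb{P}^1$'' --- the latter is false, e.g. the blow-up of $\mathbb{P}^2$ at two points has $\chi=5$ and carries a Lefschetz fibration over $\mathbb{P}^1$ with one nodal fibre; and (ii) that $F$, being a closed submanifold of the compact manifold $M$, has finite-type cohomology, so that all the Euler characteristics above are well defined and the alternating sums make sense.
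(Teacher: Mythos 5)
Your proof is correct and follows the same route as the paper: multiplicativity of the Euler characteristic for a fibre bundle over $\mathbb{P}^1$ forces $\chi(M)=2\chi(F)$, which is even. The paper simply cites multiplicativity without proof, whereas you also justify it (Mayer--Vietoris over the affine cover) and sensibly flag that the statement concerns locally trivial fibrations rather than Lefschetz fibrations; these are welcome clarifications but not a different argument.
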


\begin{proof}
The Euler characteristic is multiplicative, that is, for such a fibration we would have $\chi(M)$ =$\chi( \mathbb P^1) \cdot \chi(F)$, but $\chi( \mathbb P^1)=2$. 
\end{proof}

\begin{corollary}\cite[cor 2.19]{C}
  There are no topological   fibrations $f \colon \mathbb P^{2n} \to \mathbb P^1$ for $n > 1$.
\end{corollary}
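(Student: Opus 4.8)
The plan is simply to apply the Proposition above. Its hypothesis asks for a compact complex manifold with odd Euler characteristic, so the only thing to verify is that $\mathbb{P}^{2n}$ qualifies. It is certainly a compact complex manifold, of complex dimension $2n$; and recalling that $H^{k}(\mathbb{P}^{m};\mathbb{Z})\cong\mathbb{Z}$ for each even $k$ with $0\le k\le 2m$ and vanishes otherwise, we obtain, taking $m=2n$, a nonzero Betti number (equal to $1$) in each of the degrees $0,2,4,\dots,4n$, hence
\[
  \chi(\mathbb{P}^{2n}) \;=\; \sum_{j=0}^{2n} (-1)^{2j} \;=\; 2n+1,
\]
which is an odd integer.

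With this computation in hand the Proposition immediately yields the conclusion: since $\chi(\mathbb{P}^{2n})$ is odd, $\mathbb{P}^{2n}$ admits no topological fibration over $\mathbb{P}^1$. Concretely, were $f\colon\mathbb{P}^{2n}\to\mathbb{P}^1$ such a locally trivial fibre bundle with fibre $F$, multiplicativity of the Euler characteristic would force $2n+1=\chi(\mathbb{P}^{2n})=\chi(\mathbb{P}^1)\,\chi(F)=2\,\chi(F)$, an even number, a contradiction.

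There is essentially no obstacle here: the corollary is a one-line consequence of the Proposition, the only genuine input being the parity of $\chi(\mathbb{P}^{2n})$. Two small remarks on the hypotheses. First, the argument works verbatim for every $n\ge 1$ (for $n=1$ it asserts that $\mathbb{P}^{2}$, with $\chi=3$, does not fibre over $\mathbb{P}^1$), so the restriction $n>1$ is not forced by this proof. Second, it is the multiplicativity of $\chi$ that is being exploited, so the statement really does require $f$ to be an honest fibre bundle: if one instead allows critical points, as in Definition~\ref{def:TLF}, the Euler characteristic acquires correction terms at the singular fibres and this parity obstruction disappears.
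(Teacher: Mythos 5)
Your proof is correct and is exactly the paper's intended argument: the corollary is an immediate application of the preceding Proposition, the only input being that $\chi(\mathbb{P}^{2n})=2n+1$ is odd. Your side remarks (that the parity argument works for all $n\ge 1$ and requires an honest fibre bundle rather than a Lefschetz fibration with critical points) are accurate but not needed.
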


\begin{proposition}
  There are no algebraic  fibrations $f \colon \mathbb P^{n} \to \mathbb P^1$ for $n > 1$.
\end{proposition}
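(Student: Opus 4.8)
The plan is to use the fact that a fibration over $\mathbb{P}^1$ is in particular a non-constant morphism, and that any morphism $\mathbb{P}^n \to \mathbb{P}^1$ is given by a base-point-free pencil of hypersurfaces of a common degree; a B\'ezout-type dimension count then shows that on $\mathbb{P}^n$ with $n \ge 2$ no such pencil can have empty base locus.

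First I would record that a fibration $f \colon \mathbb{P}^n \to \mathbb{P}^1$ is surjective, hence non-constant. Pulling back $\mathcal{O}_{\mathbb{P}^1}(1)$ gives a line bundle $L = f^{*}\mathcal{O}_{\mathbb{P}^1}(1)$ on $\mathbb{P}^n$, and the two coordinate sections of $\mathcal{O}_{\mathbb{P}^1}(1)$ pull back to sections $s_0, s_1 \in H^0(\mathbb{P}^n, L)$ with no common zero (a common zero would have to map to the common zero of the homogeneous coordinates on $\mathbb{P}^1$, which is empty). Since $\Pic(\mathbb{P}^n) = \mathbb{Z}\cdot\mathcal{O}(1)$ and $f$ is non-constant, $L \cong \mathcal{O}_{\mathbb{P}^n}(d)$ for some integer $d \ge 1$; thus $s_0, s_1$ are homogeneous polynomials $F_0, F_1$ of degree $d$ with $f = [F_0 : F_1]$ and $V(F_0)\cap V(F_1) = \varnothing$.

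Next I would invoke the projective dimension theorem: in $\mathbb{P}^n$ any two hypersurfaces meet in a subvariety of dimension at least $n-2$. Concretely, $V(F_0)$ is a non-empty hypersurface (as $d \ge 1$ and $n \ge 1$), and cutting it by $F_1$ drops the dimension by at most one, so $\dim\bigl(V(F_0)\cap V(F_1)\bigr) \ge n-2 \ge 0$ since $n > 1$. Hence the base locus is non-empty, contradicting the previous step. Equivalently, two distinct fibres of $f$ would be disjoint divisors each algebraically equivalent to $d$ times a hyperplane, so their intersection number against $H^{n-2}$ would be $d^2 > 0$, again forcing them to meet.

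The only genuine subtlety is one of conventions rather than mathematics: one must fix that ``algebraic fibration'' entails surjectivity (so that $f$ is non-constant and $d \ge 1$), and recall the standard identification of a morphism to $\mathbb{P}^1$ with a base-point-free pencil. With those in place the argument is a one-line dimension count, and there is no hard step. It is worth contrasting this with the topological setting: for odd-dimensional $\mathbb{P}^n$ the Euler-characteristic obstruction of the earlier propositions does not apply, so here the algebraic hypothesis is doing the real work.
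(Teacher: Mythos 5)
Your argument is correct and is essentially the paper's own proof: the paper likewise notes that the fibres would be divisors in $\mathbb{P}^n$ and that by B\'ezout (the projective dimension theorem) any two divisors in $\mathbb{P}^n$ for $n>1$ must intersect, contradicting disjointness of fibres. Your version merely makes explicit the standard bookkeeping — identifying $f$ with a base-point-free pencil of degree-$d$ hypersurfaces via $\Pic(\mathbb{P}^n)=\mathbb{Z}$ — before running the same dimension count.
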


\begin{proof}
  Fibres of such a fibration would divisors  be in $\mathbb P^n$, but by Bezout theorem any two divisors in $\mathbb P^n$ intersect.
\end{proof}

\section{Good examples in dimension 4}

In 4 (real) dimensions, every symplectic manifold admits a Lefschetz fibration after blowing up finitely many points.  
This is the celebrated result of Donaldson   \cite{Do}:
{\it  For any symplectic 4-manifold $X$, there exists a nonnegative integer $n$ such that the $n$-fold blowup of $X$, topologically $X \#n \mathbb{CP}^2$, admits a Lefschetz fibration $f \colon X \#n \mathbb{CP}^2 \to S^2$. }

In the opposite direction, still in 4D, the existence of a topological Lefschetz fibration on a symplectic manifold guarantees the existence of a symplectic Lefschetz fibration whenever the fibres have genus at least 2
  \cite{GoS}:
{\it  If a 4-manifold $X$ admits a genus $g$ Lefschetz fibration $f \colon X \to \mathbb C$ with $g \ge 2$, then it has a symplectic structure.}

Moreover, the existence of 4D symplectic Lefschetz fibrations with arbitrary fundamental group is guaranteed by
  \cite{ABKP}:
{\it   Let $\Gamma$ be a finitely presentable group with a given finite presentation $a \colon \pi_g \to \Gamma$. 
  Then there exists a surjective homomorphism $b \colon \pi_h \to \pi_g$ for some $h \ge g$ and a symplectic Lefschetz fibration $f \colon X \to S^2$ such that
 the regular fibre of $f$ is of genus $h$,
 $\pi_1 (X) = \Gamma$, and
 the natural surjection of the fundamental group of the fibre of $f$ onto the fundamental group of $X$ coincides with $a \circ b$.}

In general it is possible to construct Lefschetz fibrations in 4D starting with a Lefschetz pencil and then blowing up its base locus (see \cite{Se2}, \cite{Se3} \cite{Go}). 
However, in such cases one needs to fix the indefiniteness of the symplectic form over the exceptional locus by glueing in a correction.
Direct constructions of Lefschetz fibrations in higher dimensions are by and large lacking in the literature. 
This gave us our first motivation to investigate the existence of symplectic Lefschetz fibrations on complex $n$-folds with $n \ge 3$. 
Our construction does not make use of Lefschetz pencils, we construct our symplectic Lefschetz fibrations directly by taking the height functions that come naturally from the Lie theory viewpoint.

\section{A caveat about the norm of  complex Morse functions}

  It is sometimes claimed in the literature  that $\abs{f}^2$ is a real Morse function whenever $f$ is a Lefschetz fibration.
  However, this is  in general  false. We state this fact as a lemma.

\begin{lemma}
Let $X$ be a complex manifold of dimension, $f\colon X \rightarrow \mathbb C$  a Lefschetz fibration and let $p$ be a critical point of $f$.
Then $p$ is a degenerate critical point of $|f-f(p)|^2$.
 
\end{lemma}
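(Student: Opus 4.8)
The plan is to reduce everything to the local Lefschetz normal form provided by \cref{def:TLF}. First I would replace $f$ by $f-f(p)$: this changes neither the critical set of $f$ nor the function $\abs{f-f(p)}^2$, so we may assume $f(p)=0$. By condition~\ref{item:TLFLocMorse} of \cref{def:TLF} there is then a holomorphic chart $(z_1,\dots,z_n)$ on a neighbourhood $U\ni p$, centred at $p$, in which $f(z)=z_1^2+\cdots+z_n^2$. Since whether a critical point of a smooth real-valued function is degenerate depends only on the rank of the Hessian, and that rank is unchanged by a smooth change of coordinates, it is enough to study $g(z):=\abs{z_1^2+\cdots+z_n^2}^2$ on $U$, viewed as a real function of the $2n$ real coordinates underlying $z$.

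The key point is then immediate. Writing $w=z_1^2+\cdots+z_n^2$, the holomorphic function $w$ vanishes to order $2$ at the origin, so $g=w\bar w=\abs{w}^2$ vanishes to order $4$ there; hence every partial derivative of $g$ of order at most $3$ vanishes at $p$. In particular $p$ really is a critical point of $g$, and the real Hessian of $g$ at $p$ is the zero $2n\times 2n$ matrix. (By direct computation in Wirtinger coordinates, $\partial^2 g/\partial z_i\partial z_j=2\delta_{ij}\bar w$, $\partial^2 g/\partial\bar z_i\partial\bar z_j=2\delta_{ij}w$ and $\partial^2 g/\partial z_i\partial\bar z_j=4z_i\bar z_j$, all vanishing at $z=0$, so the real Hessian — which is a real-linear combination of these blocks — vanishes there too.) As $n\ge1$, the zero matrix is degenerate, so $p$ is a degenerate critical point of $\abs{f-f(p)}^2$.

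I expect no genuine obstacle: the only step needing a word of justification is the coordinate-invariance of (non)degeneracy of a critical point, which holds because a diffeomorphism $\phi$ with $\phi(q)=p$ gives $\operatorname{Hess}_q(g\circ\phi)=(D\phi_q)^{\mathsf T}\,\operatorname{Hess}_p(g)\,(D\phi_q)$, a congruence and so rank-preserving. The real content of the lemma is merely the observation that the normal form $\sum_j z_j^2$ forces $\abs{f-f(p)}^2$ to vanish to fourth order at the critical point, far more than enough to make the Hessian degenerate.
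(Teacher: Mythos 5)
Your proof is correct, and it takes a genuinely different route to degeneracy than the paper, although both begin identically: replace $f$ by $f-f(p)$, invoke the local normal form of \cref{def:TLF} to reduce to $g(z)=z_1^2+\dotsb+z_n^2$ on $\mathbb{C}^n$, and use the coordinate-invariance of (non)degeneracy. From there the paper computes the first-order real derivatives of $\abs{g}^2$ and observes that $\crit\abs{g}^2\supset g^{-1}(0)$, so that every neighbourhood of $0$ contains further critical points; since a nondegenerate critical point is isolated, $0$ must be degenerate. You instead note that $\abs{g}^2=g\bar g$ vanishes to fourth order at the origin, so the full real Hessian there is the zero matrix, which is degenerate outright. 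Your argument is the more elementary and slightly more robust one: it needs no appeal to the isolation of Morse critical points, gives the stronger conclusion that the Hessian vanishes identically at $p$, and covers the case $n=1$ uniformly, where the paper's non-isolatedness step falters because $g^{-1}(0)=\set{0}$ (the conclusion still holds there, exactly by your Hessian computation). What the paper's route buys in exchange is the extra geometric information that the whole affine cone $g^{-1}(0)$ — the minimum locus of $\abs{g}^2$ — consists of critical points, which exhibits more vividly how badly $\abs{f-f(p)}^2$ fails to be a Morse function, the point the section is making.
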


\begin{proof} 
  We may choose (complex) charts centred at $p$  such that   with respect to this coordinate system
  $f(z_1, \dotsc, z_n)-f(p) =   \sum_{i=1}^n z_i^2$.
Hence, is it enough to consider the standard Lefschetz fibration 
 $g\colon \mathbb C^n \rightarrow \mathbb C$ given by
$g(z_1, \dotsc, z_n) =   \sum_{i=1}^n z_i^2$,
 and to prove that $0$ is a degenerate critical point of $|g|^2$. In real coordinates
  \begin{align*}
    z := (z_1, \dotsc, z_n)
    &\mapsto
    \sum_{i=1}^n z_i^2
    =
    \sum_{i=1}^n x_i^2 - y_i^2 + 2\sqrt{-1} x_iy_i,
  \end{align*}
  where we have written $z_i = x_i + \sqrt{-1}y_i$.
  Then we have the function
  \begin{align*}
    \abs{g}^2 \colon \mathbb{C}^n
    &\to
    \mathbb{R}
    \\
    z
    &\mapsto
    \left[ \sum_{i=1}^n \left( x_i^2 - y_i^2 \right)\right]^2 
    +
    4 \left[ \sum_{i=1}^n x_iy_i \right]^2
  \end{align*}
  whose differentials are 
  \begin{align*}
    \partial_{x_k} \abs{g}^2
    &=
    4x_k \sum_{i=1}^n \left( x_i^2 - y_i^2 \right)
    +
    8y_k \sum_{i=1}^n x_i y_i,
    \\
    \partial_{y_k} \abs{g}^2
    &=
    -4y_k \sum_{i=1}^n \left( x_i^2 - y_i^2 \right)
    +
    8x_k \sum_{i=1}^n x_i y_i.
  \end{align*}
 Since  $\crit \abs{g}^2 \supset  g^{-1}(0)$,
  any neighbourhood of $0$ contains a non-zero critical point of $\abs{g}^2$ and it follows that $0$ is a degenerate critical point of $\abs{g}^2$.  
\end{proof}

\section{SLFs in higher dimensions via Lie theory}

Let $\mathfrak g$ be a complex semisimple Lie algebra with Cartan subalgebra $\mathfrak h$, and $\mathfrak h_\mathbb R$ the real subspace generated by the roots of $\mathfrak h$. 
An element $H\in \mathfrak{h}$ is  called \textit{regular} if $\alpha \left(
H\right) \neq 0$ for all $\alpha \in \Pi $.

\begin{theorem}\label{thm-prin-1}
  \cite[thm. 3.1] {GGS1}
  Given $H_{0}\in \mathfrak{h}$ and $H\in \mathfrak{h}_{\mathbb{R}}$ with $H$ a regular  element, the potential $f_{H}:\mathcal{O} \left( H_{0}\right) \rightarrow \mathbb{C}$ defined by 
  \[
    f_{H}\left( x\right) = \langle H,x\rangle \qquad x\in \mathcal{O}\left(H_{0}\right) 
  \]
  has a finite number of isolated singularities and defines a Lefschetz fibration; that is to say
  \begin{enumerate}
    \item the singularities are (Hessian) nondegenerate;
    \item if $c_{1},c_{2}\in \mathbb{C}$ are regular values then the level manifolds $f_{H}^{-1}\left( c_{1}\right) $ and $f_{H}^{-1}\left( c_{2}\right) $ are diffeomorphic;
    \item there exists a symplectic form $\Omega $ on $\mathcal{O}\left(H_{0}\right) $ such that the regular fibres are symplectic submanifolds;
    \item each critical fibre can be written as the disjoint union of affine subspaces contained in $\mathcal O \left( H_0 \right)$, each symplectic with respect to $\Omega$.
  \end{enumerate}
\end{theorem}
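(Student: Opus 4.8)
The plan is to translate each assertion into a statement about the root system of $\mathfrak g$, working with the identification $\mathcal O(H_0)\cong G/Z_G(H_0)$, under which $T_x\mathcal O(H_0)=\{[\xi,x]:\xi\in\mathfrak g\}=\image\ad_x$, together with the $\Ad$-invariance of the Cartan--Killing form $\langle\cdot,\cdot\rangle$. Since $\langle H,[\xi,x]\rangle=-\langle[H,x],\xi\rangle$, a point $x$ is critical for $f_H$ precisely when $[H,x]=0$, i.e.\ $x\in Z_{\mathfrak g}(H)=\mathfrak h$ (using that $H$ is regular); hence $\crit f_H=\mathcal O(H_0)\cap\mathfrak h=W\cdot H_0$, which is finite because two elements of $\mathfrak h$ are $G$-conjugate iff they are $W$-conjugate. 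For the nondegeneracy in (1), fix a critical point $x_0=wH_0$ and use the chart $\xi\mapsto\Ad(\exp\xi)x_0$ with $\xi$ ranging over the complement $\bigoplus_{\alpha(x_0)\neq0}\mathfrak g_\alpha$ of $\ker\ad_{x_0}$. Expanding $f_H(\Ad(\exp\xi)x_0)$, the linear term vanishes at a critical point, so the Hessian is the quadratic form $\xi\mapsto\langle H,[\xi,[\xi,x_0]]\rangle$; only the $\mathfrak g_\alpha\oplus\mathfrak g_{-\alpha}$ pairings contribute to the $\mathfrak h$-component of $[\xi,[\xi,x_0]]$, and the form comes out as $\sum_{\alpha>0,\ \alpha(x_0)\neq0}c_\alpha\,\alpha(x_0)\,\alpha(H)\,\xi_\alpha\xi_{-\alpha}$ with $c_\alpha=2\langle e_\alpha,e_{-\alpha}\rangle\neq0$. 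This is a sum of hyperbolic forms with nonzero coefficients --- $\alpha(x_0)\neq0$ on the chart directions and $\alpha(H)\neq0$ because $H$ is regular --- hence nondegenerate, and a $\mathbb C$-linear change of coordinates puts $f_H-f_H(x_0)$ in the Morse form $z_1^2+\dots+z_n^2$ of \cref{def:TLF}.

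For the symplectic form, note that a semisimple adjoint orbit is a smooth \emph{closed} subvariety of $\mathfrak g\cong\mathbb C^N$, hence affine and Stein. Take $\Omega$ to be the restriction to $\mathcal O(H_0)$ of a Kähler form on $\mathfrak g$ --- for instance the one coming from the Hermitian inner product $-\langle X,\bar Y\rangle$ attached to a compact real form, which is essentially the symplectic form used in \cite{GGS1}. Being Kähler, $\Omega$ restricts to a symplectic form on \emph{every} complex submanifold of $\mathcal O(H_0)$; in particular each regular fibre of the holomorphic map $f_H$, a smooth complex hypersurface, is a symplectic submanifold, which proves (3) and reduces the symplectic part of (4) to checking that the pieces of the singular fibres are complex submanifolds.

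For (4), fix a critical value $c$; the singular fibre $f_H^{-1}(c)$ contains the critical points $x_0=wH_0$ with $\langle H,wH_0\rangle=c$, and near each such $x_0$ it is, by the Morse model above, swept out by complex affine subspaces through $x_0$. Globally these arise as orbits $\exp(\mathfrak u)\cdot x_0$, where $\mathfrak u$ is a nilpotent subalgebra spanned by root spaces $\mathfrak g_\alpha$ with $\alpha(x_0)\neq0$ and $\alpha$ positive for a chamber adapted to $x_0$: then $\Ad(\exp(-\eta))H-H\in\mathfrak u$, and since $\mathfrak u\perp\mathfrak h$ for $\langle\cdot,\cdot\rangle$ we get $f_H(\Ad(\exp\eta)x_0)=\langle\Ad(\exp(-\eta))H,x_0\rangle=\langle H,x_0\rangle$, so $f_H$ is \emph{constant} on $\exp(\mathfrak u)\cdot x_0$; one then checks that $\exp(\mathfrak u)\cdot x_0$ is a complex affine subspace lying inside $\mathcal O(H_0)$, and that the union of these over the critical points of the fibre is all of $f_H^{-1}(c)$. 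By the previous paragraph each such affine subspace is a complex submanifold of $\mathcal O(H_0)$, hence symplectic for $\Omega$. The delicate points here are showing that the $\Ad$-orbit is \emph{linearly} embedded and that the affine pieces exhaust the whole singular fibre.

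Finally (2): on the complement of the finitely many critical values $f_H$ is a holomorphic submersion, so integrating the horizontal lift of $\partial_t$ for the metric determined by $\Omega$ and the complex structure would carry regular fibres diffeomorphically onto one another. I expect the main obstacle to be completeness of this vector field, since $\mathcal O(H_0)$ is non-compact: one has to rule out trajectories escaping to infinity in finite time, equivalently that $f_H$ has no ``critical values at infinity''. I would establish this either by a {\L}ojasiewicz-type estimate controlling $f_H$ near the infinity of the affine variety $\mathcal O(H_0)$, or --- following \cite{GGS1} --- by exhibiting an explicit diffeomorphism $\mathcal O(H_0)\cong T^*\mathbb F$ onto the cotangent bundle of a flag manifold under which $f_H$ becomes fibrewise affine over a Morse function on $\mathbb F$; local triviality over the regular values is then immediate, and this model simultaneously pins down the diffeomorphism type of the regular fibre and gives the cleanest description of the affine subspaces in (4).
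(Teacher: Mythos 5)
Your handling of items (1) and (3) is correct and essentially the argument the source uses: the identification $T_x\mathcal{O}(H_0)=\operatorname{im}\ad_x$ together with invariance of the Cartan--Killing form gives $\crit f_H=\mathcal{O}(H_0)\cap\mathfrak{h}=\mathcal{W}\cdot H_0$ (this is exactly the proposition the paper reproduces from \cite{GGS1}), the root-space expansion of the Hessian into hyperbolic blocks $c_\alpha\,\alpha(x_0)\alpha(H)\,\xi_\alpha\xi_{-\alpha}$ with $\alpha(H)\neq 0$ by regularity is the standard computation, and taking $\Omega$ to be the restriction of the Hermitian (K\"ahler) form attached to a compact real form is precisely the form used in \cite{GGS1}, so regular fibres are symplectic simply because they are complex submanifolds.

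The gap is that items (2) and (4), which carry most of the content of the theorem, are not actually proved in your text. For (2), parallel transport along the horizontal lift requires completeness of the lifted field, and for a non-proper polynomial map on an affine variety this is a genuine issue (atypical values at infinity do occur for polynomial fibrations); you acknowledge this but neither establish the {\L}ojasiewicz-type estimate nor carry out the alternative, and saying one would "follow \cite{GGS1}" is circular in a reconstruction of its proof. The route the authors actually take runs through the diffeomorphism $\mathcal{O}(H_\Theta)\cong T^*\mathbb{F}_\Theta$ of \cref{iso}, under which $f_H$ is affine on each cotangent fibre; local triviality over regular values, the diffeomorphism type of the regular fibre, and the fibre homology statements quoted in the paper are all read off from that model, and none of this fibrewise analysis appears in your proposal. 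For (4), the three points you yourself flag as "delicate" --- that $\exp(\mathfrak{u})\cdot x_0$ is an affine subspace contained in the orbit, that such pieces exhaust the critical fibre, and that the decomposition can be made disjoint --- are the statement of item (4); moreover the appeal to the local Morse model does not help, since $\{z_1^2+\dotsb+z_n^2=0\}$ is an irreducible quadric cone for $n\geq 3$ (a union of infinitely many lines, not of finitely many affine pieces), so the affine decomposition must come from the Lie-theoretic description (nilpotent-group orbits, Bruhat-type cells), which you indicate but do not verify; even disjointness needs care, as already in $\mathfrak{sl}(2,\mathbb{C})$ the singular fibre is two lines meeting at the critical point. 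In short, you have correctly reconstructed the critical-point proposition and item (3), but a complete proof of (2) and (4) still requires the explicit fibre analysis on $T^*\mathbb{F}_\Theta$ carried out in \cite{GGS1}.
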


The full proof is presented in \cite{GGS1},  a particularly interesting component  of the proof states:

\begin{proposition}\cite[prop. 3.3]{GGS1}
  A point $x\in \mathcal O (H_0)$ is  a critical point of $f_{H}$ if and only if $x\in \mathcal{O}\left(
  H_{0}\right) \cap \mathfrak{h}=\mathcal{W}\cdot H_{0}$, where $\mathcal{W}$ 
  is the Weyl group. 
\end{proposition}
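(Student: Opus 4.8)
The plan is to compute the differential of $f_H$ along the orbit, convert the condition ``$x$ is critical'' into a Lie-algebraic equation using invariance of the Cartan--Killing form $\langle\cdot,\cdot\rangle$, and then exploit regularity of $H$ together with the conjugacy theory of Cartan subalgebras.

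\emph{Tangent space and differential.} First I would recall that $\mathcal O(H_0)=\Ad(G)\,H_0$ is a homogeneous space with $T_x\mathcal O(H_0)=\{\,[\xi,x]:\xi\in\mathfrak g\,\}$, obtained by differentiating $t\mapsto\Ad(\exp t\xi)\,x$ at $t=0$. Since $f_H$ is the restriction to $\mathcal O(H_0)$ of the linear form $y\mapsto\langle H,y\rangle$ on $\mathfrak g$, its differential at $x$ is
\[
  df_H|_x\bigl([\xi,x]\bigr)=\langle H,[\xi,x]\rangle=-\langle \xi,[H,x]\rangle,
\]
the last step being $\ad$-invariance of $\langle\cdot,\cdot\rangle$. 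As $\mathfrak g$ is semisimple, $\langle\cdot,\cdot\rangle$ is nondegenerate, so $df_H|_x$ vanishes on all of $T_x\mathcal O(H_0)$ if and only if $[H,x]=0$; that is, $x$ is a critical point of $f_H$ precisely when $x$ centralizes $H$.

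\emph{Using regularity.} Because $H\in\mathfrak h_{\mathbb R}$ is regular, $\alpha(H)\neq0$ for every root $\alpha$, and the root space decomposition $\mathfrak g=\mathfrak h\oplus\bigoplus_\alpha\mathfrak g_\alpha$ gives immediately that the centralizer of $H$ in $\mathfrak g$ equals $\mathfrak h$. Hence $x$ is a critical point of $f_H$ exactly when $x\in\mathcal O(H_0)\cap\mathfrak h$.

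\emph{Identifying the Weyl orbit.} It remains to show $\mathcal O(H_0)\cap\mathfrak h=\mathcal W\cdot H_0$. The inclusion $\supseteq$ is clear, since representatives of $\mathcal W=N_G(\mathfrak h)/Z_G(\mathfrak h)$ act on $\mathfrak h$ by $\Ad$ and preserve $\mathcal O(H_0)$. For $\subseteq$, suppose $\Ad(g)H_0=h\in\mathfrak h$, so that $\Ad(g^{-1})h=H_0$. Then $\mathfrak h$ and $\Ad(g^{-1})\mathfrak h$ are Cartan subalgebras of $\mathfrak g$ both containing $H_0$, hence Cartan subalgebras of the reductive centralizer $\mathfrak z_{\mathfrak g}(H_0)$; by conjugacy of Cartan subalgebras inside $Z_G(H_0)$ there is $z\in Z_G(H_0)$ with $\Ad(z)\Ad(g^{-1})\mathfrak h=\mathfrak h$. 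Then $gz^{-1}\in N_G(\mathfrak h)$ represents some $w\in\mathcal W$, and $\Ad(gz^{-1})H_0=\Ad(g)H_0=h$ since $z$ centralizes $H_0$, so $h=w\cdot H_0\in\mathcal W\cdot H_0$. (In particular, since $\mathcal W$ is finite, $f_H$ then has only finitely many critical points.)

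The first three steps are routine manipulations with the Killing form and the root decomposition. \textbf{The main obstacle is the last step}: when $H_0$ is singular the centralizer $\mathfrak z_{\mathfrak g}(H_0)$ strictly contains $\mathfrak h$, and one must be careful to choose the conjugating element inside $Z_G(H_0)$ --- so that it fixes $H_0$ --- and to know that $N_G(\mathfrak h)$ surjects onto $\mathcal W$ for the connected group $G$; this is precisely where the structure theory of complex semisimple groups genuinely enters.
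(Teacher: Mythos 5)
Your proof is correct and takes essentially the same route as the argument behind the cited \cite[prop.~3.3]{GGS1} (the present paper only states the proposition and refers there for the proof): compute $df_H$ on tangent vectors $[\xi,x]$, use invariance and nondegeneracy of the Cartan--Killing form to reduce criticality to $[H,x]=0$, apply regularity of $H$ to identify its centraliser with $\mathfrak{h}$, and identify $\mathcal{O}(H_0)\cap\mathfrak{h}$ with $\mathcal{W}\cdot H_0$ by conjugacy of Cartan subalgebras. The final step, including the care taken to conjugate inside $Z_G(H_0)$, is handled correctly, so there is nothing to add.
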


Having found a construction of Lefschetz fibrations in higher dimensions, the next
step toward a description of the Fukaya--Seidel  category of the corresponding  LG model  would involve the identification of the Fukaya category 
of a regular fibre. Thus, we studied the diffeomorphism type of  a regular level  for the Lefschetz fibration. This first required the 
realisation of the adjoint orbit as the cotangent bundle of a flag manifold, as we now describe.

We choose  a set of
positive roots $\Pi ^{+}$ and simple roots $\Sigma \subset \Pi ^{+}$
with corresponding Weyl chamber is $\mathfrak{a}^{+}$.
 A subset $\Theta \subset \Sigma $ defines a parabolic subalgebra $%
\mathfrak{p}_{\Theta }$ with parabolic subgroup $P_{\Theta }$ and a flag manifold
$\mathbb{F}_{\Theta }=G/P_{\Theta }$. 
An element 
$H_{\Theta }\in \mathrm{cl}\mathfrak{a}^{+}$ is \textit{characteristic}
for $\Theta \subset \Sigma $ if $\Theta =\{\alpha \in \Sigma :\alpha \left(
H_{\Theta }\right) =0\}$. 
Let
$Z_{\Theta }=\{g\in G:\mathrm{Ad}\left( g\right) H_{\Theta }=H_{\Theta
}\}$ be the centraliser in $G$ of the characteristic element $H_{\Theta }$.

\begin{theorem}\cite[thm. 2.1]{GGS2}\label{iso}
\label{teodifeocotan}The adjoint orbit $\mathcal{O}\left( H_{\Theta }\right)
=\mathrm{Ad}\left( G\right) \cdot H_{\Theta }\approx G/Z_{\Theta }$ of the
characteristic element $H_{\Theta }$ is a $C^{\infty }$ vector bundle over $%
\mathbb{F}_{\Theta }$ isomorphic to the cotangent bundle $T^{\ast }\mathbb{F}%
_{\Theta }$. Moreover, we can write down a diffeomorphism $\iota :\mathrm{Ad%
}\left( G\right) \cdot H_{\Theta }\rightarrow T^{\ast }\mathbb{F}_{\Theta }$
such that

\begin{enumerate}
\item $\iota $ is equivariant with respect to the actions of   $K$, that is,
for all $k\in K$, 
\begin{equation*}
\iota \circ \mathrm{Ad}\left( k\right) =\widetilde{k}\circ \iota
\end{equation*}%
where $K$ is the compact subgroup in the Iwasawa decomposition $G=KAN$, and
$\widetilde{k}$ is the lifting to $T^{\ast }\mathbb{F}_{\Theta }$ (via
the differential) of the action of $k$ on $\mathbb{F}_{\Theta }$.

\item The pullback of the canonical symplectic form on $T^{\ast }\mathbb{F}%
_{\Theta }$ by $\iota $ is the (real) Kirillov--Kostant--Souriaux form on
the orbit.
\end{enumerate}
\end{theorem}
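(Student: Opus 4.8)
The plan is to produce the diffeomorphism $\iota$ explicitly from the Iwasawa decomposition $G=KAN$ and the parabolic structure attached to $H_\Theta$, and then to read the three assertions off that construction. The characteristic condition means that $\mathfrak z_\Theta:=\mathfrak z_\mathfrak g(H_\Theta)=\mathfrak h\oplus\bigoplus_{\alpha(H_\Theta)=0}\mathfrak g_\alpha$ is the Levi factor of $\mathfrak p_\Theta$, with nilradical $\mathfrak n_\Theta=\bigoplus_{\alpha(H_\Theta)>0}\mathfrak g_\alpha$ and opposite nilradical $\mathfrak n_\Theta^-=\bigoplus_{\alpha(H_\Theta)<0}\mathfrak g_\alpha$; correspondingly $P_\Theta=Z_\Theta\ltimes N_\Theta$ with $Z_\Theta\subseteq P_\Theta$, while $K$ acts transitively on $G/P_\Theta$, so that $G=KP_\Theta$, $K\cap P_\Theta=K_\Theta:=Z_K(H_\Theta)$ and $\mathbb F_\Theta=K/K_\Theta$. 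The inclusion $Z_\Theta\subseteq P_\Theta$ gives a fibration $\pi\colon\mathcal O(H_\Theta)\cong G/Z_\Theta\to G/P_\Theta=\mathbb F_\Theta$, and the compact orbit $\mathrm{Ad}(K)H_\Theta$ is a global section of $\pi$ since $K\cap Z_\Theta=K_\Theta$.

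The next step identifies $\pi$ as a smooth vector bundle isomorphic to $T^*\mathbb F_\Theta$. The map $\mathfrak n_\Theta\to\mathfrak n_\Theta$, $Y\mapsto\mathrm{Ad}(\exp Y)H_\Theta-H_\Theta$, is a polynomial diffeomorphism: its differential at $0$ is $-\mathrm{ad}(H_\Theta)$, invertible on $\mathfrak n_\Theta$ with eigenvalues $-\alpha(H_\Theta)\ne 0$, and the map is unipotent; hence $\mathrm{Ad}(N_\Theta)H_\Theta=H_\Theta+\mathfrak n_\Theta$. Combined with $G=KP_\Theta$ and $\mathrm{Ad}(Z_\Theta)H_\Theta=H_\Theta$, this yields a $K$-equivariant diffeomorphism $\mathcal O(H_\Theta)\cong K\times_{K_\Theta}\mathfrak n_\Theta$ realising $\pi$ as a $C^\infty$ vector bundle. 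It then remains to give a $K_\Theta$-equivariant $\mathbb R$-linear isomorphism $\mathfrak n_\Theta\cong(\mathfrak k/\mathfrak k_\Theta)^*$: fixing an $\mathrm{Ad}(K)$-invariant inner product on $\mathfrak k$ identifies $(\mathfrak k/\mathfrak k_\Theta)^*$ with $T_{eK_\Theta}\mathbb F_\Theta=\mathrm{ad}(\mathfrak k)H_\Theta$, and the Cartan involution $\theta$ (conjugation relative to the compact real form, interchanging $\mathfrak g_\alpha$ with $\mathfrak g_{-\alpha}$ and sending $H_\Theta$ to $-H_\Theta$) shows $\mathrm{ad}(\mathfrak k)H_\Theta=\{v\in\mathfrak n_\Theta\oplus\mathfrak n_\Theta^-:\theta v=-v\}$, on which the projection to $\mathfrak n_\Theta$ is a $K_\Theta$-equivariant linear isomorphism. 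Composing the three identifications defines $\iota$, and its $K$-equivariance --- assertion (1) --- is manifest because each factor is $K$- (respectively $K_\Theta$-) equivariant.

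For assertion (2), take $\Omega$ to be the real KKS form of \cref{thm-prin-1}, given on $\mathcal O(H_\Theta)$ by $\Omega_X(\mathrm{ad}(A)X,\mathrm{ad}(B)X)=\kappa(X,[A,B])$ with $\kappa$ the real part of the Killing form of $\mathfrak g$. Two facts make the cotangent picture transparent. First, the section $\mathrm{Ad}(K)H_\Theta\cong\mathbb F_\Theta$ is Lagrangian: for $A,B\in\mathfrak k$ one has $[A,B]\in\mathfrak k$ and $H_\Theta\in i\mathfrak k$, and the Killing form is purely imaginary on $i\mathfrak k\times\mathfrak k$, so $\kappa(H_\Theta,[A,B])=0$, while $\dim_\mathbb R\mathbb F_\Theta=\tfrac12\dim_\mathbb R\mathcal O(H_\Theta)$. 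Second, every fibre of $\pi$ is Lagrangian: at $H_\Theta$ one has $\kappa(H_\Theta,[\mathfrak n_\Theta,\mathfrak n_\Theta])=0$ because brackets of root vectors lie in root spaces, orthogonal to $\mathfrak h$ under the Killing form, and $\mathrm{Ad}(N_\Theta)$ acts transitively on the fibre preserving $\Omega$. Thus $\pi$ is a Lagrangian fibration with a Lagrangian section and simply connected (affine) fibres, so by the standard structure theory of such fibrations it is canonically symplectomorphic to $(T^*\mathbb F_\Theta,-d\lambda)$; one then checks that this canonical symplectomorphism is precisely the $\iota$ constructed above, giving $\iota^*(-d\lambda)=\Omega$.

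The main obstacle is this final identification. The fibre $H_\Theta+\mathfrak n_\Theta$ carries three a priori distinct affine structures --- the linear structure of $\mathfrak n_\Theta$, the one transported by the simply transitive (unipotent, hence nonlinear) $\mathrm{Ad}(N_\Theta)$-action, and the flat structure built from the commuting Hamiltonian flows of $1$-forms pulled back from $\mathbb F_\Theta$ --- and the symplectic assertion amounts to showing that these agree, so that $\iota$ is fibrewise linear and intertwines the forms. A more computational alternative, perhaps cleaner in practice, is to write $\iota$ directly from the Iwasawa decomposition (sending $\mathrm{Ad}(kan)H_\Theta$ to the covector at $kK_\Theta$ determined by the $N_\Theta$-component of $n$) and to compute $\iota^*\lambda$ by hand: $K$-equivariance reduces the verification to the single fibre over $eK_\Theta$, and since both $\Omega$ and $-d\lambda$ annihilate vertical--vertical and horizontal--horizontal pairs, only the mixed terms survive, which is where the bulk of the bookkeeping lies.
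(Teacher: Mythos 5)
Your construction of $\iota$ is sound and is essentially the one underlying the cited result: decompose via $G=KP_\Theta$, use that $\mathrm{Ad}(N_\Theta)H_\Theta=H_\Theta+\mathfrak n_\Theta$ (unipotent polynomial map with invertible differential $-\mathrm{ad}(H_\Theta)$ on $\mathfrak n_\Theta$), obtain $\mathcal O(H_\Theta)\cong K\times_{K_\Theta}\mathfrak n_\Theta$, and identify the fibre $K_\Theta$-equivariantly with a cotangent space of $\mathbb F_\Theta$ (your route via an invariant metric and the Cartan involution is equivalent to the Killing-form duality $\mathfrak n_\Theta\cong(\mathfrak n_\Theta^-)^*$ used in \cite{GGS2}). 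Part (1) follows as you say. The problem is assertion (2), and you flag it yourself: showing that the fibres and the compact orbit $\mathrm{Ad}(K)H_\Theta$ are Lagrangian for the real KKS form only tells you that \emph{some} identification of the fibration with $T^*\mathbb F_\Theta$ is symplectic; the content of (2) is that the \emph{specific} map $\iota$ you built from the Lie-theoretic data pulls back the canonical form to the KKS form. Your appeal to the ``standard structure theory'' of Lagrangian fibrations with a Lagrangian section also hides nontrivial hypotheses in the non-compact setting: one must show that the fibrewise affine structure induced by the Hamiltonian flows of $1$-forms pulled back from $\mathbb F_\Theta$ is complete (so that the $T^*_x\mathbb F_\Theta$-action on each fibre is defined, free and transitive), and then --- the very point you defer --- that this affine structure agrees with the linear structure of $\mathfrak n_\Theta$ used to define $\iota$. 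Without reconciling the ``three a priori distinct affine structures'' you list, assertion (2) is not proved; as it stands the argument establishes only that $\mathcal O(H_\Theta)$ fibres Lagrangianly over $\mathbb F_\Theta$ with a Lagrangian section.

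Note also that the present paper gives no proof of this theorem (it is quoted from \cite[thm.~2.1]{GGS2}); the proof there takes the computational route you relegate to an ``alternative'': it writes $\iota$ explicitly and computes the pullback of the canonical (Liouville) form directly, using $K$-equivariance to reduce the verification to points over the origin of $\mathbb F_\Theta$. Carrying out that computation --- your last paragraph, including the mixed vertical/horizontal terms you call ``the bulk of the bookkeeping'' --- is exactly what is missing; be aware as well that your parenthetical claim that the canonical form kills ``horizontal--horizontal pairs'' is not meaningful until a specific horizontal distribution is fixed, since the $K$-orbits through nonzero covectors are not Lagrangian in general.
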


Viewing the orbit as the cotangent bundle of a flag manifold, we can identify the topology of the of the fibres in terms of the topology of the flag.

\begin{corollary}\cite[cor. 4.5]{GGS1}
  The homology of a  regular fibre  coincides  with the homology of 
  $\mathbb{F}_{\Theta }\setminus \mathcal{W}\cdot H_{\Theta}$.
  In particular the middle Betti number is $k-1$ where $k$ is 
  the number of singularities of the fibration (equal to the number of elements in
  $\mathcal W \cdot H_\Theta$).
\end{corollary}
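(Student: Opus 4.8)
The plan is to leverage Theorem~\ref{iso}, which identifies $\mathcal{O}(H_\Theta)$ diffeomorphically with $T^*\mathbb{F}_\Theta$, together with Theorem~\ref{thm-prin-1} and the proposition that the critical points of $f_H$ are exactly the Weyl-group orbit $\mathcal{W}\cdot H_\Theta$ sitting inside $\mathcal{O}(H_\Theta)\cap\mathfrak{h}$. First I would observe that a regular fibre $f_H^{-1}(c)$, being a smooth level set of the height function, can be deformation-retracted onto something simpler; the natural candidate is that the total space $T^*\mathbb{F}_\Theta$ retracts onto the zero section $\mathbb{F}_\Theta$, and one wants to transport this retraction to the fibre. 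Concretely, the idea is that the linear map $f_H$ restricted to the cotangent fibres is (generically) a submersion, so that away from the critical values the fibration $f_H$ over $\mathbb{C}$ looks, fibrewise over $\mathbb{F}_\Theta$, like projecting off one linear coordinate; combining the scaling flow on the cotangent directions with this observation should produce a deformation retraction of $f_H^{-1}(c)$ onto the set of points in the zero section $\mathbb{F}_\Theta$ over which the fibre is ``compatible'' with the value $c$, which turns out to be $\mathbb{F}_\Theta$ minus the finitely many points $\mathcal{W}\cdot H_\Theta$ (the images of the critical points under the retraction to the base).

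Next I would make this precise by working locally near each critical point $w\cdot H_\Theta$. By Definition~\ref{def:TLF}(\ref{item:TLFLocMorse}), in suitable coordinates $f_H = z_1^2+\cdots+z_n^2$ near such a point, and the local model for how the regular fibre $\{z_1^2+\cdots+z_n^2=\epsilon\}$ sits relative to the singular fibre is the standard Milnor-fibre picture: the regular fibre is homotopy equivalent to $T^*S^{n-1}$ and degenerates by collapsing the vanishing sphere $S^{n-1}$. Globally, the passage from the singular fibre (a union of affine subspaces, by Theorem~\ref{thm-prin-1}(4), each retracting onto a point of $\mathcal{W}\cdot H_\Theta$) to a nearby regular fibre is therefore governed by removing, for each critical point, a small ball and regluing the Milnor fibre. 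On the level of the base $\mathbb{F}_\Theta$, this is exactly the statement that the regular fibre is homotopy equivalent to $\mathbb{F}_\Theta$ with the $k$ points $\mathcal{W}\cdot H_\Theta$ deleted; since homology is a homotopy invariant, $H_*(f_H^{-1}(c)) \cong H_*(\mathbb{F}_\Theta\setminus\mathcal{W}\cdot H_\Theta)$.

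For the statement about the middle Betti number, I would use the long exact sequence of the pair $(\mathbb{F}_\Theta,\,\mathbb{F}_\Theta\setminus\mathcal{W}\cdot H_\Theta)$ together with excision: $H_*(\mathbb{F}_\Theta,\,\mathbb{F}_\Theta\setminus\mathcal{W}\cdot H_\Theta)$ is a direct sum of $k$ copies of the local homology $H_*(\mathbb{R}^{2(n-1)},\mathbb{R}^{2(n-1)}\setminus\{0\})$ (here $\dim_{\mathbb{C}}\mathbb{F}_\Theta = n-1$ since $\dim_{\mathbb{C}}\mathcal{O}(H_\Theta)=n$ and the orbit is the cotangent bundle), which is concentrated in degree $2(n-1)$, i.e.\ in the real middle dimension of the fibre. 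Feeding this into the long exact sequence and using the known Betti numbers of the flag manifold $\mathbb{F}_\Theta$ (which vanish in odd degrees and in particular the map $H_{2n-2}(\mathbb{F}_\Theta)\to H_{2n-2}(\mathbb{F}_\Theta,\mathbb{F}_\Theta\setminus\mathcal{W}\cdot H_\Theta)\cong\mathbb{Z}^k$ has image of rank one, detected by the fundamental class), one reads off that the middle Betti number of $\mathbb{F}_\Theta\setminus\mathcal{W}\cdot H_\Theta$ is $k-1$.

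The main obstacle I anticipate is the first step: rigorously producing the deformation retraction of a regular fibre onto $\mathbb{F}_\Theta\setminus\mathcal{W}\cdot H_\Theta$, i.e.\ showing that the global geometry of $f_H$ on $T^*\mathbb{F}_\Theta$ really does match the naive fibrewise-linear picture and that the scaling flow on cotangent fibres can be made to interact correctly with the level set condition $\langle H,x\rangle = c$. One must check that the cotangent directions over which $f_H$ fails to be a submersion are precisely (and only) those lying over the deleted points, and glue the local Morse-theoretic models with this global flow without introducing extra topology; the identification of the deleted locus with $\mathcal{W}\cdot H_\Theta$ is exactly where the proposition on critical points enters. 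Once that retraction is in hand, everything else is standard algebraic topology.
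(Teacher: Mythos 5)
The essential content of this corollary is exactly the step you defer to the end: producing the homotopy equivalence between a regular fibre and $\mathbb{F}_{\Theta}\setminus\mathcal{W}\cdot H_{\Theta}$, and your sketch of that step does not yet close the gap. The mechanism that makes it work (and is how \cite{GGS1} argues) is not the scaling flow on cotangent fibres nor the local Milnor-fibre models, but the following structural fact: the orbit fibres over $\mathbb{F}_{\Theta}\cong\mathrm{Ad}(K)\cdot H_{\Theta}$ with fibres that are \emph{affine subspaces} of $\mathfrak{g}$, and since $f_{H}=\langle H,\cdot\rangle$ is linear on $\mathfrak{g}$, its restriction to each such affine fibre is an affine function, hence either surjective onto $\mathbb{C}$ with contractible (affine hyperplane) level sets, or constant; it is constant precisely over the finitely many points of the flag corresponding to $\mathcal{W}\cdot H_{\Theta}$, where the constant is the critical value (this is where the proposition on critical points enters, as you correctly anticipate). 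For a regular value $c$ the level set therefore misses exactly those fibres and meets every other fibre in a nonempty contractible affine subspace, so $f_{H}^{-1}(c)\to\mathbb{F}_{\Theta}\setminus\mathcal{W}\cdot H_{\Theta}$ is a fibre bundle with contractible fibres, giving the homology identification. Your alternative route via ``removing a ball and regluing the Milnor fibre'' cannot substitute for this: the Picard--Lefschetz local model controls how the topology changes between nearby singular and regular fibres, not the global homotopy type of a regular fibre, and inferring the regular fibre from the structure of the singular fibre (Theorem~\ref{thm-prin-1}(4)) is circular without the fibration-over-the-flag argument.

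In the Betti-number computation the long exact sequence of the pair is the right tool, but your dimension bookkeeping is wrong and, taken literally, the argument outputs $0$ rather than $k-1$ once $\dim_{\mathbb{C}}\mathcal{O}(H_{\Theta})>2$. Since $\mathcal{O}(H_{\Theta})\approx T^{*}\mathbb{F}_{\Theta}$ as real manifolds, $\dim_{\mathbb{R}}\mathbb{F}_{\Theta}=\dim_{\mathbb{C}}\mathcal{O}(H_{\Theta})=n$, i.e.\ $\dim_{\mathbb{C}}\mathbb{F}_{\Theta}=n/2$, not $n-1$ (your count is consistent only in the $\mathfrak{sl}(2,\mathbb{C})$ case). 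Consequently $H_{*}\bigl(\mathbb{F}_{\Theta},\mathbb{F}_{\Theta}\setminus\mathcal{W}\cdot H_{\Theta}\bigr)\cong\mathbb{Z}^{k}$ is concentrated in degree $n$, not $2(n-1)$; in your labelling the relevant group would sit in degree $2n-3>n$, where the punctured flag has no homology at all. The corrected sequence reads $0\to H_{n}(\mathbb{F}_{\Theta})\cong\mathbb{Z}\to\mathbb{Z}^{k}\to H_{n-1}\bigl(\mathbb{F}_{\Theta}\setminus\mathcal{W}\cdot H_{\Theta}\bigr)\to H_{n-1}(\mathbb{F}_{\Theta})=0$, the first map being the diagonal (fundamental class to local orientation classes) and the last group vanishing because flag manifolds have no odd homology; hence the group in question is $\mathbb{Z}^{k-1}$, and $n-1$ is indeed the middle degree of the regular fibre, which has complex dimension $n-1$. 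With the fibration argument above and this corrected dimension count, the rest of your outline is standard and correct.
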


For the case where singular fibres have only one critical point, we have the following corollary.

\begin{corollary}\cite[cor. 5.1]{GGS1}
  \label{cor.sing}
  The homology of the singular fibre though $ w H_\Theta$, $w \in \mathcal{W}$, 
  coincides with that of
  \[
    \mathbb{F}_{H_\Theta} \setminus \set{uH_\Theta \in \mathcal{W}\cdot H_{\Theta} | u\neq w}.
  \]
  In particular, the middle  Betti number of this singular fibre 
  equals $k-2$, where $k$ is the number of singularities of the  fibration $f_H$.
\end{corollary}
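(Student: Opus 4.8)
The plan is to deduce this corollary from the preceding \Cref{iso} together with \cite[cor. 4.5]{GGS1}, reducing everything to a computation about the complement of a finite set of points in a flag manifold. First I would fix $w \in \mathcal W$ and consider the singular fibre $X_w := f_H^{-1}(f_H(wH_\Theta))$, which by hypothesis contains exactly one critical point, namely $wH_\Theta$ (recall from \cite[prop. 3.3]{GGS1} that the critical points are precisely the Weyl group orbit $\mathcal W \cdot H_\Theta \subset \mathfrak h$, and the height values at distinct critical points in a one-critical-point fibre are distinct). Under the diffeomorphism $\iota$ of \Cref{iso} identifying $\mathcal O(H_\Theta)$ with $T^\ast \mathbb F_\Theta$, the regular fibre deformation retracts onto $\mathbb F_\Theta \setminus \mathcal W \cdot H_\Theta$ (this is the content of \cite[cor. 4.5]{GGS1}); the singular fibre $X_w$ should, by the local Morse model $z_1^2 + \dots + z_n^2$ near $wH_\Theta$, be obtained from the regular fibre by collapsing the vanishing cycle attached to $wH_\Theta$ — equivalently, one fills back in the point $wH_\Theta$ that was removed, so that $X_w$ deformation retracts onto $\mathbb F_\Theta \setminus \{uH_\Theta : u \neq w\}$. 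This is exactly the displayed claim.

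Concretely I would argue as follows. By \Cref{iso} the fibration $f_H$ transported to $T^\ast \mathbb F_\Theta$ has the same fibres up to diffeomorphism; the key local statement is that near the critical point $wH_\Theta$, in the standard Morse chart, the singular fibre is the cone $\{z_1^2 + \dots + z_n^2 = 0\}$, while nearby regular fibres are the affine quadrics $\{z_1^2 + \dots + z_n^2 = \varepsilon\}$, and these quadrics retract onto an $(n-1)$-sphere (the vanishing cycle) which bounds in the singular fibre. Away from this chart, the singular and regular fibres agree. So passing from regular to singular fibre amounts, homologically, to the effect of adding the apex of the cone back in. On the flag-manifold side, where the regular fibre is $\mathbb F_\Theta \setminus \mathcal W \cdot H_\Theta$, the single puncture corresponding to $wH_\Theta$ gets filled, giving $\mathbb F_\Theta \setminus \{uH_\Theta : u \neq w\}$. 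One then invokes the Mayer--Vietoris or long exact sequence of the pair to confirm the homology identification rather than merely a homotopy equivalence.

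For the Betti number count: \cite[cor. 4.5]{GGS1} gives that the middle Betti number of the regular fibre is $k-1$, where $k = \#(\mathcal W \cdot H_\Theta)$ is the number of singularities. Filling in one puncture of $\mathbb F_\Theta \setminus \mathcal W \cdot H_\Theta$ removes one generator from middle homology (each puncture of an even-real-dimensional manifold contributes, via the long exact sequence of the pair $(\mathbb F_\Theta \setminus (\text{one fewer point}),\ \mathbb F_\Theta \setminus \mathcal W\cdot H_\Theta)$ and excision, a class in degree $\dim_{\mathbb R}\mathbb F_\Theta - 1$, which is the middle degree of the fibre), so the middle Betti number drops from $k-1$ to $k-2$. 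I would spell out the relevant piece of the long exact sequence, using that $\mathbb F_\Theta$ itself has no cohomology in that odd middle degree of the fibre, to make the $k-1 \mapsto k-2$ bookkeeping precise.

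The main obstacle I anticipate is justifying rigorously that the singular fibre really is homotopy equivalent (or at least homology equivalent) to $\mathbb F_\Theta$ minus the \emph{remaining} punctures — i.e., that filling back exactly one point is the correct local-to-global statement. This requires knowing that the vanishing cycle at $wH_\Theta$ is precisely the cycle that, on the flag side, is the boundary of a small disk around the puncture $wH_\Theta$, and that no other homology is created or destroyed in the degeneration. This should follow from the explicit form of $\iota$ in \Cref{iso} and the explicit description in \cite[thm. 3.1]{GGS1} of the critical fibres as disjoint unions of affine subspaces, but it is the step where one must be careful; everything else is a routine diagram chase.
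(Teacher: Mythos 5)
Note first that this paper does not actually prove the corollary: it is quoted from \cite[cor. 5.1]{GGS1}, so your proposal can only be measured against the approach indicated by the surrounding results. Judged on its own, your argument has a genuine gap at its central step. You deduce the description of the singular fibre from that of the regular fibre via the Picard--Lefschetz collapse picture: ``away from the Morse chart the singular and regular fibres agree'', so degenerating collapses the vanishing cycle, i.e.\ fills one puncture of $\mathbb{F}_{\Theta}\setminus \mathcal{W}\cdot H_{\Theta}$ back in. That transport argument is only justified for fibrations that are proper (or otherwise controlled at infinity): the retraction of a nearby regular fibre onto the singular fibre is a global statement, not a consequence of the local model $z_1^2+\dotsb+z_n^2$. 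Here the fibres are non-compact affine varieties inside the non-compact orbit $\mathcal{O}(H_0)$ --- indeed the failure of properness is precisely why item (2) of \Cref{thm-prin-1} (diffeomorphism of regular fibres) required a Lie-theoretic proof instead of Ehresmann's theorem --- and in a non-proper family homology can be created or destroyed at infinity, so the local Morse model alone does not determine how the homology changes from a regular to a singular fibre. You flag this obstacle in your final paragraph, but you do not close it, and the collapse picture is the entire content of your argument.

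The route suggested by the cited source, and by the ingredients restated in this paper, is direct rather than by degeneration: using the identification $\mathcal{O}(H_{\Theta})\approx T^{*}\mathbb{F}_{\Theta}$ of \Cref{iso}, one analyses the singular fibre through $wH_{\Theta}$ via its projection to $\mathbb{F}_{\Theta}$; the image is $\mathbb{F}_{\Theta}$ minus the remaining critical points $uH_{\Theta}$, $u\neq w$, and the pieces lying over it are contractible affine subspaces --- this is the role of item (4) of \Cref{thm-prin-1}, which you mention only as an afterthought --- so one gets the stated homology identification exactly as in the proof of \cite[cor. 4.5]{GGS1} for a regular fibre, with no comparison between different fibres needed. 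Your Betti-number bookkeeping is fine and matches the statement: $\mathbb{F}_{\Theta}$ is closed, even-dimensional, with no homology in the odd degree $\dim_{\mathbb{R}}\mathbb{F}_{\Theta}-1$, so deleting $k$ (resp.\ $k-1$) points produces rank $k-1$ (resp.\ $k-2$) there, which is the middle degree of the fibre. The missing piece is the structural identification itself, which should be established directly and not by an unjustified appeal to the degeneration of regular fibres.
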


\section{Compactifications and their Hodge diamonds}

  Theorem \ref{iso} makes it clear that the  
adjoint orbits considered here  are not compact. 
We want to compare the behaviour of vanishing cycles on $\mathcal O(H_0)$ and on its compactifications.
  Expressing the adjoint orbit as an algebraic variety, we
homogenise its ideal to obtain  a projective variety, which serves as a
compactification.
We calculate  
the sheaf-cohomological dimensions $\dim H^q (X, \Omega^p)$ for the compactified orbits  as well as  for the  fibres of the SLF.
These dimensions shall be called the diamond for the given space; indeed, this is well-known as the Hodge diamond in the non-singular case.
 Calculating such diamonds is computationally 
heavy, so we used Macaulay2.

Choosing a compactification is in general a delicate task:  a
different choice of generators for the defining ideal of the orbit can result in completely
different diamonds of the corresponding compactification, as example \ref{badhd} will show.
To illustrate the behaviour of diamonds, we present some examples of 
 adjoint orbits for $\mathfrak{sl}(3, \mathbb C)$, for which there are three isomorphism types. 
 We chose one that compactifies smoothly and another whose compactification acquires degenerate singularities.  

\subsection{An SLF with 3 critical values}

 In $\mathfrak{sl} (3, \mathbb C)$, consider the orbit $\mathcal O (H_0)$ of
\[
  H_0 = 
  \begin{pmatrix}
    2 & 0 & 0 \\
    0 & -1 & 0 \\
    0 & 0 & -1
  \end{pmatrix}
\]
under the adjoint action.   We fix the
element
\[
  H = 
  \begin{pmatrix}
    1 & 0 & 0 \\
    0 & -1 & 0 \\
    0 & 0 & 0
  \end{pmatrix}
\]
to define the potential $f_H$.  
%
  A general element
$A \in \mathfrak{sl} \left( 3, \mathbb{C} \right)$ has the form
\begin{equation}\label{gen3}A=\left(\begin{matrix}
             x_1 &  y_1&  y_2\\
             z_1 &  x_2 &  y_3\\
             z_2 &  z_3 &  -x_1 - x_2
  \end{matrix}\right)\text{.}
  \end{equation}
In this example, the adjoint orbit $\mathcal{O} (H_0)$ consists of all the
matrices with the minimal polynomial $(A + \id)(A - 2\id)$. So 
the orbit is the affine variety cut out by the 
ideal $I$ generated by the polynomial entries of $(A + \id)(A - 2\id)$.
To obtain a projectivisation of $X$, we first homogenise its ideal $I$ with
respect to a new variable $t$, then
take the corresponding projective variety.  
In this case, the  projective variety
$\overline{X}$ is a smooth compactification of $X$ and has 
 Hodge diamond:
\[
  \begin{array}{ccccccccc}
    &&&& 1 &&&& \\
    &&& 0 && 0 &&& \\
    && 0 && 2 && 0 && \\
    & 0 && 0 && 0 && 0 &  \\
    0 && 0 && 3 && 0 && 0 \\
    & 0 && 0 && 0 && 0 & \\
    && 0 && 2 && 0 && \\
    &&& 0 && 0 &&& \\
    &&&& 1 &&&& 
  \end{array} \text{.}
\]

 We now calculate the Hodge diamond of a compactified regular fibre. 
The potential corresponding to our choice of $H$ is 
 $f_H = x_1 - x_2$.
The critical values of this potential are $\pm 3$ and $0$.
Since all regular fibres of an SLF are isomorphic, it suffices to chose the
regular value $1$.  We then define the regular fibre $X_1$  as the
variety in $\mathfrak {sl} (3, \mathbb C) \cong \mathbb C^8$ corresponding to
the ideal $J$ obtained by summing $I$ with the ideal generated by $f_H-1$. 
We then homogenise $J$ to obtain a projectivisation $\overline{X}_1$ of the
regular fibre $X_1$. The Hodge diamond of $\overline{X}_1$ is:
\[
  \begin{array}{ccccccc}
    &&& 1 &&& \\
    && 0 && 0 && \\
    & 0 && 2 && 0 & \\
    0 && 0 && 0 && 0  \\
    & 0 && 2 && 0 & \\
    && 0 && 0 && \\
    &&& 1 &&& 
  \end{array}\text{.}
\]

\begin{remark}
An interesting feature to observe here is the absence of middle cohomology for the
regular fibre. Suppose that the potential extended to this compactification without degenerate singularities, then 
 because  $f_H$  has singularities, the fundamental lemma of Picard--Lefschetz theory 
would imply that there must exist vanishing cycles, which contradicts the absence of middle homology.
\end{remark}

Generalising this example to the case of $\mathfrak{sl}(n,\mathbb C)$, we obtained:

\begin{proposition}\cite[Prop. 2]{CG} Let $H_0 = \diag (n, -1, \dotsc, -1)$. 
Then the orbit of $H_0$ in  $\mathfrak{sl}(n+1,\mathbb C)$ compactifies holomorphically  to a trivial product.
\end{proposition}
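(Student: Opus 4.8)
The plan is to realise $\mathcal O(H_0)$ explicitly as the complement of a hyperplane section of a Segre variety, and then to check that homogenising its defining ideal reproduces precisely that Segre variety, which is the trivial product $\mathbb P^{n}\times\mathbb P^{n}$.

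First I would record an algebraic model of the orbit. Since $H_0$ has eigenvalues $n$ and $-1$ with multiplicities $1$ and $n$, a matrix $A\in\mathfrak{sl}(n+1,\mathbb C)$ lies in $\mathcal O(H_0)$ if and only if $(A-n\,\id)(A+\id)=0$; indeed this relation forces $A$ to be diagonalisable with eigenvalues in $\{n,-1\}$, and $\tr A=0$ then pins the multiplicities to $(1,n)$. Setting $B:=A+\id$, the condition becomes $B^{2}=(n+1)B$ with $\tr B=n+1$, so that $P:=B/(n+1)$ is a rank-one idempotent. Writing such a $P$ as $P=v\,\xi^{\top}$ with $\xi^{\top}v=1$ and sending it to the pair $([v],[\xi])$, while conversely sending a line $[v]$ together with a complementary hyperplane $[\xi]$ to $v\,\xi^{\top}/(\xi^{\top}v)$, gives mutually inverse morphisms, hence a biregular isomorphism
\[
  \mathcal O(H_0)\ \xrightarrow{\ \sim\ }\ \bigl\{\,([v],[\xi])\in\mathbb P^{n}\times(\mathbb P^{n})^{\ast}\ :\ \xi^{\top}v\neq 0\,\bigr\},
\]
a non-empty, hence dense, Zariski-open subset of $\mathbb P^{n}\times(\mathbb P^{n})^{\ast}$.

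Next I would write the homogenisation down explicitly. Under this identification an orbit point has matrix entries $A_{ij}=\tfrac{(n+1)v_i\xi_j}{\xi^{\top}v}-\delta_{ij}$, so clearing the denominator shows that, in $\mathbb P^{(n+1)^{2}-1}$ with homogeneous coordinates the entries $A_{ij}$ (subject to $\sum_i A_{ii}=0$) together with the homogenising coordinate $t$, the orbit is the restriction to $\{t\neq 0\}$ of the map
\[
  \varphi\colon\ \mathbb P^{n}\times(\mathbb P^{n})^{\ast}\ \longrightarrow\ \mathbb P^{(n+1)^{2}-1},\qquad
  ([v],[\xi])\ \longmapsto\ \bigl[\,\bigl((n+1)v_i\xi_j-(\xi^{\top}v)\,\delta_{ij}\bigr)_{i,j}\ :\ \xi^{\top}v\,\bigr].
\]
This $\varphi$ is a genuine morphism on all of $\mathbb P^{n}\times(\mathbb P^{n})^{\ast}$ (its components vanish simultaneously only if $v\otimes\xi=0$), its image lies in the hyperplane $\{\sum_i A_{ii}=0\}$, and it restricts to the inclusion of $\mathcal O(H_0)$ on $\{\xi^{\top}v\neq 0\}$. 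Its components are the $(n+1)^{2}$ bilinear forms $(n+1)v_i\xi_j-(\xi^{\top}v)\delta_{ij}$ together with $\xi^{\top}v$, and a one-line computation --- the off-diagonal monomials $v_i\xi_j$ appear directly, while each $v_i\xi_i$ is recovered from the diagonal form and $\xi^{\top}v$ --- shows that these span the whole $(n+1)^{2}$-dimensional space of bilinear forms on $\mathbb C^{n+1}\times\mathbb C^{n+1}$. Hence $\varphi$ agrees with the Segre embedding $\mathbb P^{n}\times(\mathbb P^{n})^{\ast}\hookrightarrow\mathbb P^{(n+1)^{2}-1}$ up to a linear automorphism of the target, so $\varphi$ is a closed immersion. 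Since $\mathbb P^{n}\times(\mathbb P^{n})^{\ast}$ is complete, its image under $\varphi$ is closed, contains the dense orbit, and therefore equals the Zariski closure of $\mathcal O(H_0)$, i.e.\ the variety obtained by homogenising the orbit's ideal. Consequently this compactification is $\varphi\bigl(\mathbb P^{n}\times(\mathbb P^{n})^{\ast}\bigr)\cong\mathbb P^{n}\times\mathbb P^{n}$, a smooth projective, hence compact complex, trivial product of projective spaces. (For $n=2$ this gives $\mathbb P^{2}\times\mathbb P^{2}$, whose K\"unneth Hodge diamond is exactly the one displayed for $\overline{X}$ above.)

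The point requiring the most care is the step from homogenising the orbit's ideal to taking its Zariski closure: homogenising a chosen generating set can introduce spurious components along $\{t=0\}$, so one must pass to the saturation with respect to $t$ --- equivalently, verify directly that the only points added at infinity are the rank-one matrices of trace $0$, namely the hyperplane section $\{\xi^{\top}v=0\}$ of the Segre variety. Once $\varphi$ is known to be a closed immersion defined on all of $\mathbb P^{n}\times(\mathbb P^{n})^{\ast}$ this is automatic, but it is the one place where one must not conflate the naive homogenisation of generators with the genuine projective closure.
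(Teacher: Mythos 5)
Your argument is correct. It differs in route from the one the paper has in mind: the text here (and the citation trail through \cite{GGS2}) compactifies Lie-theoretically, realising $\mathcal{O}(H_{0})\cong G/Z_{\Theta}$ as the unique open dense orbit of the diagonal $G$-action on $\mathbb{F}_{\Theta}\times \mathbb{F}_{\Theta^{\ast}}$ (Proposition~[GGS2, Prop.~3.1]); for $H_{0}=\diag(n,-1,\dotsc,-1)$ the flag and its dual are $\mathbb{P}^{n}$ and $(\mathbb{P}^{n})^{\ast}$, and compactness of the product gives the statement at once. You instead argue at the matrix level: the minimal-polynomial description $(A-n\,\id)(A+\id)=0$, $\tr A=0$, the rank-one idempotent $P=(A+\id)/(n+1)=v\,\xi^{\top}/(\xi^{\top}v)$, the identification of $\mathcal{O}(H_{0})$ with pairs $([v],[\xi])$ in general position, and then the observation that the $(1,1)$-forms $(n+1)v_{i}\xi_{j}-(\xi^{\top}v)\delta_{ij}$ together with $\xi^{\top}v$ span all of $H^{0}(\mathcal{O}(1,1))$, so your $\varphi$ is the Segre embedding up to a linear automorphism, hence a closed immersion whose image is the projective closure. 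What each approach buys: the Lie-theoretic one is coordinate-free and works uniformly for every characteristic element (any $\mathbb{F}_{\Theta}\times\mathbb{F}_{\Theta^{\ast}}$), whereas yours is self-contained linear algebra, identifies the boundary divisor $\{\xi^{\top}v=0\}$ explicitly, and --- relevant to this paper's computational framework --- shows that the compactification obtained by homogenising the defining ideal agrees with the abstract product (consistent with the displayed diamond of $\mathbb{P}^{2}\times\mathbb{P}^{2}$ for $n=2$). Your closing caveat distinguishing the naive homogenisation of a generating set from the saturated ideal of the projective closure is well taken, and is exactly the phenomenon the paper illustrates with the two compactifications $\overline{X}_{0}^{I}$ and $\overline{X}_{0}^{J}$; since the proposition only asserts the existence of a holomorphic compactification equal to a trivial product, your closed-immersion argument suffices.
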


\begin{corollary} \cite[Cor 3]{CG} 
  Choose $H = \diag(1,-1,0, \dotsc, 0) $ and $H_0 = \diag (n, -1, \dotsc, -1)$ in $\mathfrak{sl}(n+1,\mathbb C)$.
  Any extension of the potential $f_H$ to the compactification  $\mathbb P^n\times {\mathbb P^n}^{\ast}$ of the orbit $\mathcal O (H_0)$ cannot be of Morse type; that is, it must have degenerate singularities.
\end{corollary}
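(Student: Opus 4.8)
The plan is to combine three facts: the adjoint orbit $\mathcal O(H_0)$ for $H_0=\diag(n,-1,\dots,-1)$ is, by the previous proposition, holomorphically compactified by the \emph{trivial product} $\mathbb P^n\times{\mathbb P^n}^{\ast}$; the potential $f_H$ on the orbit has critical points exactly at $\mathcal W\cdot H_0$, by the Proposition attributed to \cite{GGS1}; and for the specific $H_0$ above the stabiliser group is $S(U(1)\times U(n))$, so the Weyl orbit $\mathcal W\cdot H_0$ has $n+1$ elements and $f_H$ has $n+1\ge 2$ distinct critical values (for $n\ge 1$). Thus $f_H$ genuinely has singularities and, being a Lefschetz fibration by Theorem~\ref{thm-prin-1}, each singular fibre carries a nontrivial vanishing cycle in middle homology by Picard--Lefschetz theory. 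The strategy is then a proof by contradiction: assume some holomorphic extension $\overline f_H\colon \mathbb P^n\times{\mathbb P^n}^{\ast}\to\mathbb P^1$ is of Morse (Lefschetz) type, and derive an incompatibility with the cohomology of the regular fibre of such an extension.

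First I would argue that the regular fibre $\overline X_c$ of a putative Morse extension $\overline f_H$ would be a smooth hypersurface (the closure of a regular fibre $X_c$ of $f_H$, together with whatever it picks up at the boundary divisor $\big(\mathbb P^n\times{\mathbb P^n}^{\ast}\big)\setminus\mathcal O(H_0)$), and that by the Lefschetz hyperplane/weak Lefschetz theorem applied to the product and to the fibre, its low-degree cohomology is forced: in particular $H^2(\overline X_c)$ would be two-dimensional (inherited from the two $\mathbb P^n$ factors), exactly matching the $\overline X_1$ diamond computed in the $3$-dimensional example and its $\mathfrak{sl}(n+1)$ generalisation. Second, I would invoke the computed (or, in general $n$, the structurally-forced) Hodge diamond of the compactified regular fibre $\overline X_c$: it has \emph{vanishing middle primitive cohomology} — the diamond is that of a product-like variety with no extra middle classes, just as in the displayed $\mathfrak{sl}(3)$ example where $h^{2,2}$-type middle entries are $0$. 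Third, I would contrast this with the Picard--Lefschetz prediction: if $\overline f_H$ were Morse with $k'\ge 1$ singularities, the smooth fibre would contain $k'$ vanishing cycles spanning a nonzero subspace of middle homology, because the local model $z_1^2+\dots+z_n^2$ produces a nontrivial vanishing sphere and the monodromy around a critical value acts on middle homology by a nontrivial Picard--Lefschetz transvection. These two conclusions are contradictory, so no Morse extension exists; any extension must acquire degenerate singularities.

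The one genuine gap is that passing from the orbit's potential $f_H$ to a \emph{global} holomorphic map $\overline f_H$ on the compactification is not automatic: a priori the rational map obtained by extending $f_H=\langle H,\cdot\rangle$ across the boundary divisor could have indeterminacy locus, and even resolving it one must check that the resulting fibration still has at least one critical point in order to run Picard--Lefschetz. I would handle this by noting that the corollary's claim is precisely about extensions \emph{to} $\mathbb P^n\times{\mathbb P^n}^{\ast}$ that \emph{are} genuine Lefschetz fibrations, and ruling those out; if no holomorphic extension to a well-defined map exists at all, the statement is vacuously true, and if one exists it must, restricted to the dense open orbit, agree with $f_H$ and hence inherit the $n+1$ critical values there, so it has singularities. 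The main obstacle, and the step I expect to require the most care, is the cohomological input: rigorously establishing that the compactified regular fibre $\overline X_c$ has no middle primitive cohomology in the general $\mathfrak{sl}(n+1)$ case — in the $\mathfrak{sl}(3)$ case this is the explicit Macaulay2 computation, but for general $n$ one needs a clean argument (Lefschetz hyperplane theorem for the smooth hypersurface $\overline X_c\subset$ a suitable projective embedding of $\mathbb P^n\times{\mathbb P^n}^{\ast}$, degree/dimension bookkeeping) showing the diamond stays ``thin'' through the middle. Once that cohomological vanishing is in hand, the contradiction with Picard--Lefschetz is immediate and the corollary follows.
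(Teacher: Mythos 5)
Your strategy is the same as the paper's: compactify $\mathcal O(H_0)$ as the trivial product $\mathbb P^n\times{\mathbb P^n}^{\ast}$ via the preceding Proposition, observe that $f_H$ already has nondegenerate singularities at $\mathcal W\cdot H_0$ on the dense orbit, and then run the Picard--Lefschetz contradiction of the Remark: a Morse-type extension would force vanishing cycles in the middle homology of its regular fibre, while the compactified regular fibre has no middle cohomology. So the architecture matches; the issue is the step you yourself single out as the hard one, which as proposed does not close. The Lefschetz hyperplane theorem identifies the cohomology of the compactified fibre with that of $\mathbb P^n\times{\mathbb P^n}^{\ast}$ only in degrees strictly below the middle dimension $2n-1$; it gives no control whatsoever of the middle cohomology, which is exactly the group your contradiction needs to vanish. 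In the $\mathfrak{sl}(3)$ case the paper gets this from the Macaulay2 diamond, and for general $n$ some substitute argument is indispensable.

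The clean way to supply it is the explicit description of this particular compactification: writing $A+\id=(n+1)P$ with $P$ a rank-one projection identifies $\mathcal O(H_0)$ with the set of pairs $([v],[w])\in\mathbb P^n\times{\mathbb P^n}^{\ast}$ with $w(v)\neq 0$, and $f_H$ becomes the rational function $(n+1)\bigl(v_1w_1-v_2w_2\bigr)/w(v)$. Hence the closure of a regular fibre $f_H=c$ is the bidegree-$(1,1)$ hypersurface $(n+1)(v_1w_1-v_2w_2)=c\,w(v)$, which for regular $c$ is smooth and projectively equivalent to the incidence variety of pairs (point, hyperplane) with the point on the hyperplane, i.e.\ a flag manifold. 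Flag manifolds have no odd-degree cohomology, and the middle degree $2n-1$ of the fibre is odd, so the middle Betti number vanishes for every $n$; this replaces your appeal to the hyperplane theorem. Two smaller corrections: $f_H$ has $n+1$ critical points but only three distinct critical values, namely $\pm(n+1)$ and $0$ (not $n+1$ distinct values); and your treatment of possible indeterminacy is fine, since the statement only concerns globally defined extensions, whose restriction to the dense orbit must agree with $f_H$ and therefore already carries its nondegenerate singularities.
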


\subsection{An SLF with 4 critical values}\label{reg1}
In $\mathfrak{sl}\left( 3,\mathbb{C}\right) $ we take 
\[
H=H_{0}=\left( 
\begin{array}{ccc}
1 & 0 & 0 \\
0 & -1 & 0 \\ 
0 & 0 &  0%
\end{array}%
\right) ,
\]%
which  is regular since it has 3 distinct eigenvalues.
Then $X= \mathcal{O}\left( H_{0}\right) $ is the set of 
matrices in  $\mathfrak{sl}\left( 3,\mathbb{C}\right) $ with eigenvalues $1,0,-1$. 
This set forms a submanifold of real dimension $6$ (a complex threefold).
In this  case $\mathcal{W}\simeq S_3$ acts via
conjugation by permutation matrices. Therefore, the potential 
$f_H=x_1-x_2$ has 6 singularities;
namely, the 6 diagonal matrices with diagonal entries $1,0,-1$.
The four singular values of $f_H$ are $\pm 1, \pm2$.
  Thus, $0$ is a regular value for $f_H$.
Let 
$A \in \mathfrak{sl}(3,\mathbb C)$ be a general element written as in (\ref{gen3}), 
and let $p= \det(A)$, $q=\det(A-\id)$. The ideals $\langle p,q\rangle $ and $\langle p-q,q\rangle $ are clearly 
identical and either of them defines the orbit though $H_0$ as an affine variety in $ \mathfrak{sl}\left( 3,\mathbb{C}\right) $.
Now
$$I = \langle p,q,f_H\rangle \qquad J=\langle p,p-q,f_H\rangle $$ 
are two  identical ideals cutting out the regular fibre $X_0$ over $0$. Let
$I_{\homog}$ and $J_{\homog}$ be the respective homogenisations and 
notice that $I_{\hom}\neq J_{\hom}$, so that  they define distinct projective
varieties, and thus two distinct compactifications 
\begin{align*}
  \overline X_0^I &= \Proj (\mathbb C[ x_1,x_2,y_1,y_2,y_3,z_1,z_2,z_3,t]/I_{\hom}) 
  \quad \text{and}
  \\
  \overline X_0^J &= \Proj(\mathbb C[x_1,x_2,y_1,y_2,y_3,z_1,z_2,z_3,t]/J_{\hom}) 
\end{align*}
of $X_0$.
Their diamonds  are given in figure \ref{badhd}.

\begin{figure}[htp]
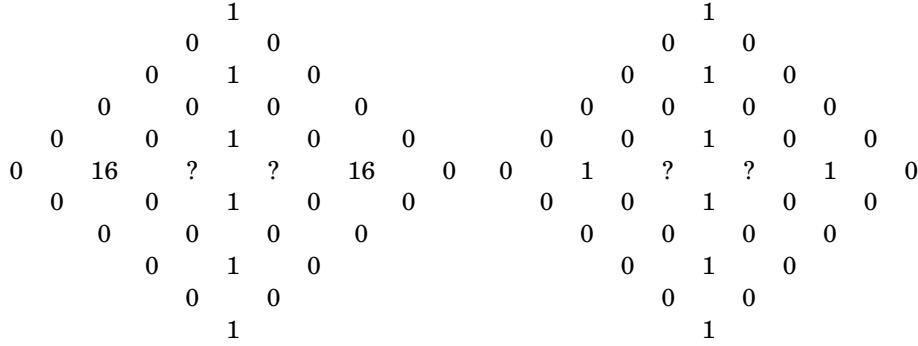

  \begin{subfigure}[b]{0.45\textwidth}
    $\begin{array}{ccccccccccc}
      &&&&& 1 \cr
      &&&& 0 && 0 \cr
      &&& 0 && 1 && 0  \cr
      && 0 && 0 && 0 && 0  \cr
      & 0 && 0 && 1 && 0 && 0  \cr
       0 && 16 && ? && ? && 16 && 0 \cr
      & 0 && 0 && 1 && 0 && 0  \cr
      && 0 && 0 && 0 && 0  \cr
      &&& 0 && 1 && 0  \cr
      &&&& 0 && 0  \cr
      &&&&& 1 \cr
    \end{array}$
    \label{fig:110ur}
  \end{subfigure}
  \qquad
  \begin{subfigure}[b]{0.45\textwidth}
    $\begin{array}{ccccccccccc}
      &&&&& 1 &&&&&    \\
      &&&& 0 && 0 &&&&    \\
      &&& 0 && 1 && 0 &&&    \\
      && 0 && 0 && 0 && 0 &&    \\
      & 0 && 0 && 1 && 0 && 0 &    \\
       0 & & 1 && ? && ? && 1 && 0    \\
      & 0 && 0 && 1 && 0 && 0 &    \\
      && 0 && 0 && 0 && 0 &&    \\
      &&& 0 && 1 && 0 &&&    \\
      &&&& 0 && 0 &&&&    \\
      &&&&& 1 &&&&&    
    \end{array}$
    \label{fig:110sr}
  \end{subfigure}
  \caption{The diamonds of two projectivisations $\overline X_0^I$ (left) and $\overline X_0^J$ (right) of  the regular fibre
  corresponding to $H=H_0=\Diag(1,-1,0)$.
}
  \label{badhd}
\end{figure}

\begin{remark}
The variety $\overline X_0^J$ is an irreducible component of $\overline X_0^I$. Indeed, we find that $I \subset J$
and that $J$ is a prime ideal (whereas $I$ is not).
The discrepancy of values in the middle row
is corroborated by the discrepancy between the expected Euler characteristics of
the compactifications.
\end{remark}

\begin{remark} 
  Macaulay2 greatly facilitates cohomological calculations that
  are unfeasible by hand. However, the memory requirements rise steeply with the
  dimension of the variety. 
  The unknown entries in our diamonds (marked with a `?')
  exhausted the
  48GB of RAM of the computers of our collaborators at IACS Kolkata, 
   without producing an answer.
\end{remark}

\begin{question*}
 This leaves us with the open question of characterising  all the 
compactifications of a given orbit produced by the method of homogenising the defining ideals.
\end{question*}

Nevertheless, once again methods of Lie theory provide us with  sharper tools, and 
we obtain a compactification that is natural from the Lie theory viewpoint.

Let $w_{0}$ be the principal involution of the Weyl group $\mathcal{W}$,
that is, the element of highest length as a product of simple roots. 
For a subset 
$\Theta \subset \Sigma $ we put $\Theta ^{\ast }=-w_{0}\Theta $ and refer to 
$\mathbb{F}_{\Theta ^{\ast }}$ as the flag manifold dual to $\mathbb{F}%
_{\Theta }$. If $H_{\Theta }$ is a characteristic element for $%
\Theta $ then $-w_{0}H_{\Theta }$ is characteristic for $\Theta ^{\ast }$.
Then  the diagonal action of $G$ on the product $\mathbb{F}_{\Theta
}\times \mathbb{F}_{\Theta ^{\ast }}$ as $(g,(x,y))\mapsto (gx,gy)$, $g\in G$%
, $x,y\in \mathbb{F}$  has just one open and dense orbit
which is $G/Z_{\Theta }$.

Let $x_{0}$ be the origin of $\mathbb{F}_{\Theta }$. Since $G$ acts
transitively on $\mathbb{F}_{\Theta}$, all the $G$-orbits of the diagonal action
have the form $G\cdot (x_{0},y)$, with $y\in \mathbb{F}_{\Theta ^{\ast }}$.
Thus, the $G$-orbits are in bijection with 
the orbits through $wy_{0}$, $w\in \mathcal{W}$, where $y_{0}$ is the origin
of $\mathbb{F}_{\Theta ^{\ast }}$.
We obtain:

\begin{proposition}\cite[Prop. 3.1]{GGS2}
The orbit $G\cdot (x_{0},{w}_{0}y_{0})$ is open and dense in $\mathbb{F%
}_{\Theta }\times \mathbb{F}_{\Theta ^{\ast }}$ and identifies to $G/Z_{H}$.
\end{proposition}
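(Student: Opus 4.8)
The plan is to show that among the finitely many $G$-orbits in $\mathbb{F}_{\Theta}\times\mathbb{F}_{\Theta^\ast}$ classified by the preceding discussion, the one through $(x_0, w_0 y_0)$ is the generic (open dense) one, and then to identify its stabiliser. First I would use the fact already established in the excerpt that every $G$-orbit of the diagonal action has the form $G\cdot(x_0, w y_0)$ for $w\in\mathcal{W}$, so there are at most $\abs{\mathcal{W}}$ orbits, one of which must be open and dense since $G$ acts with finitely many orbits on this smooth variety. The key observation is a dimension count: the open orbit is the unique one whose dimension equals $\dim\mathbb{F}_\Theta+\dim\mathbb{F}_{\Theta^\ast}$, equivalently the one whose point-stabiliser has minimal dimension. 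So I would compute $\dim\operatorname{Stab}_G(x_0, w y_0)=\dim(P_\Theta\cap w P_{\Theta^\ast}w^{-1})$ and argue it is minimised precisely at $w=w_0$.

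The cleanest way to pin down $w=w_0$ is via the Bruhat-type decomposition / transversality of opposite parabolics. Recall $\Theta^\ast=-w_0\Theta$, so $\mathbb{F}_{\Theta^\ast}=G/P_{\Theta^\ast}$ where $P_{\Theta^\ast}$ is conjugate (via $w_0$) to the \emph{opposite} parabolic $P_\Theta^-$ of $P_\Theta$. Thus $G\cdot(x_0, w_0 y_0)$ corresponds to the pair of parabolics $(P_\Theta, P_\Theta^-)$, which are in \emph{general position}: their intersection is exactly the common Levi factor, $P_\Theta\cap P_\Theta^- = Z_\Theta$ (the centraliser of $H_\Theta$, by the standard structure theory of parabolic subgroups of reductive groups), and this intersection has the smallest possible dimension among all pairs $(P_\Theta, gP_{\Theta^\ast}g^{-1})$. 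For any other $w\ne w_0$, the intersection $P_\Theta\cap wP_{\Theta^\ast}w^{-1}$ strictly contains a conjugate of $Z_\Theta$ in dimension — this is where one invokes the fact that the big Bruhat cell is the unique dense one and that $w_0$ is the longest element. Hence $\dim G\cdot(x_0,w_0 y_0)=\dim G-\dim Z_\Theta=\dim(G/Z_\Theta)$ is maximal, forcing this orbit to be open and dense; and its stabiliser is $Z_\Theta=Z_H$, giving the identification $G\cdot(x_0,w_0 y_0)\cong G/Z_H$, which by Theorem \ref{iso} is exactly the adjoint orbit $\mathcal{O}(H_\Theta)$.

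The main obstacle I anticipate is making the transversality statement $P_\Theta\cap P_\Theta^- = Z_\Theta$ and the minimality of its dimension fully precise in the complex semisimple (rather than split reductive) setting, and correctly bookkeeping the duality $\Theta^\ast=-w_0\Theta$ so that $w_0 y_0$ really does land on the opposite parabolic and not merely on some other conjugate. Concretely, one must verify that the $G$-orbit map sends $(x_0, w_0 y_0)$ to the pair whose relative position is the open Bruhat cell in $P_\Theta\backslash G/P_{\Theta^\ast}$, and that the double-coset combinatorics $W_\Theta\backslash\mathcal{W}/W_{\Theta^\ast}$ has a unique maximal-length (equivalently, open) element realised by $w_0$. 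Once the relative-position dictionary is set up — pairs of points in $\mathbb{F}_\Theta\times\mathbb{F}_{\Theta^\ast}$ $\leftrightarrow$ double cosets $\leftrightarrow$ $G$-orbits — the rest is the standard fact that the longest double coset is open and dense, together with the Levi decomposition identifying the stabiliser; I would cite the relevant structure theory (e.g.\ from standard references on parabolic subgroups and flag varieties) rather than reprove it. A sanity check in the running $\mathfrak{sl}(3,\mathbb{C})$ example, where $\mathbb{F}_\Theta=\mathbb{P}^2$, $\mathbb{F}_{\Theta^\ast}={\mathbb{P}^2}^\ast$, and the open orbit is the incidence-complement $\{(p,\ell): p\notin\ell\}\cong T^\ast\mathbb{P}^2$, confirms the picture and matches the compactification $\mathbb{P}^n\times{\mathbb{P}^n}^\ast$ used earlier.
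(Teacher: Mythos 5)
Your argument is essentially the same as the paper's (i.e.\ the proof in [GGS2]): the stabiliser of $(x_0,w_0y_0)$ is $P_\Theta\cap w_0P_{\Theta^\ast}w_0^{-1}=P_\Theta\cap P_\Theta^-=Z_\Theta=Z_H$, and transversality of opposite parabolics, $\mathfrak p_\Theta+\mathfrak p_\Theta^-=\mathfrak g$, makes this orbit full-dimensional, hence open, hence dense by irreducibility of $\mathbb F_\Theta\times\mathbb F_{\Theta^\ast}$, giving the identification with $G/Z_H$. The one inaccuracy is immaterial: the stabiliser dimension is minimised on the whole double coset $W_\Theta w_0 W_{\Theta^\ast}$ rather than ``precisely at $w_0$'', but your argument only needs the orbit through $(x_0,w_0y_0)$ itself to be open and dense.
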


\begin{remark}
  Katzarkov, Kontsevich, and Pantev  \cite{KKP}  give three definitions
  of Hodge numbers for Landau--Ginzburg models and conjecture their equivalence. Understanding the relation
  between the diamonds we presented here and those Hodge numbers provides
  a new perspective to our work.
\end{remark}


\begin{thebibliography}{WWW}

\bibitem[Ab]{Ab} Abouzaid, M.\thinspace ; \textit{Morse homology, tropical
geometry, and homological mirror symmetry for toric varieties}. Selecta
Math. (N.S.) \textbf{15} (2009), no. 2, 189--270.

\bibitem[AAEKO]{AAEKO} Abouzaid, M.\thinspace ; Auroux, D.\thinspace ;
Efimov, A.\thinspace ; Katzarkov, L.\thinspace ; Orlov, D.\thinspace ; 
\textit{Homological mirror symmetry for punctured spheres}, J. Amer. Math.
Soc. \textbf{26} (2013), 1051--1083 .

\bibitem[AAK]{AAK} Abouzaid, M.\thinspace ; Auroux, D.\thinspace ;
Katzarkov, L.\thinspace; \textit{Lagrangian fibrations on blowups of toric varieties 
and mirror symmetry for hypersurfacdes}, arXiv:1205:0053.

\bibitem[AbS]{AbS} Abouzaid, M.\thinspace ; Smith, I.\thinspace ; \textit{%
Homological mirror symmetry for the 4-torus}. Duke Math. J. 1\textbf{52}
(2010), no. 3, 373--440.


\bibitem[ABKP]{ABKP} Amor\'{o}s, J.\thinspace ; Bogomolov, F.\thinspace ;
Katzarkov, L.\thinspace ; Pantev, T.\thinspace ; \textit{Symplectic
Lefschetz fibrations with arbitrary fundamental groups}, J. Differential
Geom. \textbf{54} (2000), no. 3, 489--545.

\bibitem[AKO1]{AKO1} Auroux, D.; Katzarkov, L.; Orlov, D.; \textit{Mirror
symmetry for weighted projective planes and their noncommutative deformations%
}, Ann. Math. \textbf{167} (2008), 867--943.

\bibitem[AKO2]{AKO2} Auroux, D.\thinspace ; Katzarkov, L.\thinspace ; Orlov,
D.\thinspace ; \textit{Mirror symmetry for del Pezzo surfaces: vanishing
cycles and coherent sheaves}, Inventiones Math. \textbf{166} (2006), 537--582.

\bibitem[BO]{BO} Bondal, A.\thinspace ; D. Orlov,D.\thinspace ; \textit{%
Reconstruction of a variety from the derived category and groups of
autoequivalences}, Compositio Math. \textbf{125} (2001), no. 3, 327--344.


\bibitem[BDFKK]{BDFKK} Ballard, M.\thinspace ; Diemer, C.\thinspace ;  Favero,D.\thinspace ; Katzarkov, L\thinspace ; Kerr, G.\thinspace ;
\textit{ The Mori program and non-Fano homological mirror symmetry}, arXix:1302.0803. 

  \bibitem[C]{C} Callander, B.\thinspace;
    \textit{Lefschetz Fibrations},
    Master's Thesis,
    Universidade Estadual de Campinas (2013).
    
  \bibitem[CG]{CG} 
    Callander, B.\thinspace; Gasparim, E.\thinspace;
    \textit{ Hodge diamonds and adjoint orbits}, 
    arXiv:1311.1265.
    
 \bibitem[Do]{Do} Donaldson, S. K\thinspace .; \textit{Lefschetz fibrations
in symplectic geometry}, Proceedings of the International Congress of
Mathematicians, Vol. II (Berlin, 1998), Doc. Math. Extra Vol. II (1998),
309--314.
  
  \bibitem[E]{E} Efimov, A.\thinspace ; \textit{Homological mirrror symmetry
for curves of higher genus} Adv. Math. \textbf{230} (2012), no. 2, 493--530.

\bibitem[F]{F} Fukaya, K.\thinspace ; \textit{Mirror symmetry of abelian
varieties and multi-theta functions}. J. Algebraic Geom. \textbf{11} (2002),
no. 3, 393--512.


 


  \bibitem[GGS1]{GGS1}
    Gasparim, E\thinspace;  Grama, L\thinspace;  San Martin, L. A. B.\thinspace;
    \emph{Lefschetz fibrations on adjoint orbits},
    arXiv:1309.4418.

  \bibitem[GGS2]{GGS2}
    Gasparim, E\thinspace;  Grama, L\thinspace;  San Martin, L. A. B.\thinspace;
      \emph{Adjoint orbits of semisimple Lie groups and  Lagrangian submanifolds}, arXiv:1401.2418.

\bibitem[GKR]{GKR} Gross, M.\thinspace; Katzarkov, L.\thinspace; Rudatt, H.\thinspace;
\textit{Towards mirror symmetry for varieties of general type}, arXiv:1202:4042.


    
    \bibitem[Go]{Go} Gompf, R. E.\thinspace ; \textit{Symplectic structures from Lefschetz
pencils in high dimensions}, Geometry \& Topology Monographs \textbf{7}:
Proceedings of the Casson Fest (2004) 267--290.

\bibitem[GoS]{GoS} Gompf, R.\thinspace ; Stipsicz, A.\thinspace ; \textit{An
introduction to 4-manifolds and Kirby calculus}, Graduate Studies in
Mathematics \textbf{20}, American Math. Society, Providence (1999).

\bibitem[KKP]{KKP}
Katzarkov, L.\thinspace; Kontsevich, M.\thinspace;  Pantev, T.\thinspace;
\textit{Bogomolov-Tian-Todorov theorems for Landau-Ginzburg models},
arXiv:1409.5996.


  \bibitem[Ko]{Ko}
    Kontsevich, M.\thinspace;
    \emph{Homological algebra of Mirror Symmetry},
    Proc. International Congress of Mathematicians (Zurich, 1994) Birkh\"auser, Basel (1995) 120--139.

\bibitem[PZ]{PZ} Polishchuk, A.\thinspace; Zaslow, E.\thinspace; \textit{Categorial mirror
symmetry: The elliptic curve}, Adv. Theor. Math. Phys. \textbf{2} (1998)
443--470.

\bibitem[Se1]{Se1} Seidel, P.\thinspace ; \textit{Homological mirror
symmetry for the genus two curve}. J. Algebraic Geom. \textbf{20} (2011),
no. 4, 727--769.


 \bibitem[Se2]{Se2}
    Seidel, P.\thinspace;
    \emph{More about vanishing cycles and mutation}.  Symplectic Geometry and Mirror Symmetry, World Scientific, 2001, 429--465.

  \bibitem[Se3]{Se3}
    Seidel, P.\thinspace;
    \emph{Fukaya categories and Picard-Lefschetz theory},
    Zurich Lectures in Advanced Mathematics, European Math. Soc., Zurich (2008).

\bibitem[Sh]{Sh} Sheridan, N.\thinspace ; \textit{Homological Mirror
Symmetry for Calabi--Yau hypersurfaces in projective space}, arXiv:1111.0632.

\bibitem[S]{S} Smith, I.\thinspace ; \textit{Floer cohomology and pencils of
quadrics}. Invent. Math. \textbf{189} (2012), no. 1, 149--250.


\end{thebibliography}
\end{document}